\pgfplotsset{compat=1.15}
\newlist{inparaenum}{enumerate}{2}% allow two levels of nesting in an enumerate-like environment
\setlist[inparaenum]{nosep}% compact spacing for all nesting levels
\setlist[inparaenum,1]{label=\bfseries\alph*.}% labels for top level
\newtheorem{theorem}{Theorem}[section]
\newtheorem{example}[theorem]{Example}
\newtheorem{lemma}[theorem]{Lemma}
\newtheorem{remark}[theorem]{Remark}
\newtheorem{proposition}[theorem]{Proposition}
\def\comment#1{}
\def\invddots{\mathinner{\mskip1mu\raise1pt\vbox{\kern7pt\hbox{.}}\mskip2mu
		\raise4pt\hbox{.}\mskip2mu\raise7pt\hbox{.}\mskip1mu}}
\journal{TBD}
\begin{document}

\begin{frontmatter}

%% Title, authors and addresses

%% use the tnoteref command within \title for footnotes;
%% use the tnotetext command for theassociated footnote;
%% use the fnref command within \author or \address for footnotes;
%% use the fntext command for theassociated footnote;
%% use the corref command within \author for corresponding author footnotes;
%% use the cortext command for theassociated footnote;
%% use the ead command for the email address,
%% and the form \ead[url] for the home page:
%% \title{Title\tnoteref{label1}}
%% \tnotetext[label1]{}
%% \author{Name\corref{cor1}\fnref{label2}}
%% \ead{email address}
%% \ead[url]{home page}
%% \fntext[label2]{}
%% \cortext[cor1]{}
%% \address{Address\fnref{label3}}
%% \fntext[label3]{}

\title{Gohberg-Kaashoek Numbers and Forward Stability of the Schur Canonical Form}

%% use optional labels to link authors explicitly to addresses:
 \author[label1]{A. Minenkova, E. Nitch-Griffin, V. Olshevsky}
 \address[label1]{University of Connecticut}
%% \address[label2]{}

%\author[label2]{}

%\address[label2]{
%	{Department of Mathematics,  University of Connecticut, Storrs CT 06269-3009,  USA.}}

\begin{abstract}
%% Text of abstract
In  the present paper we complete the stability of Schur decomposition depending on the Jordan structure of the perturbation, started in \cite{MNO}.
\end{abstract}

\begin{keyword} structure-preserving perturbations \sep Schur decomposition \sep the Gohberg-Kaashoek numbers \sep invariant subspaces \sep gaps.
%% keywords here, in the form: keyword \sep keyword

%% PACS codes here, in the form: \PACS code \sep code

%% MSC codes here, in the form: \MSC code \sep code
%% or \MSC[2008] code \sep code (2000 is the default)

\end{keyword}

\end{frontmatter}

%% \linenumbers

%% main text
\section{Introduction}

 	For every quadratic matrix $A$ there is a decomposition $A=UTU^*$ where $U$ is unitary and $T$ is upper triangular. This triangular matrix $T$ is called a \textit{Schur Triangular form} and the factorization is called the \textit{Schur Decomposition}.
 	
In \cite{MNO} it was shown that in general there is no forward stability for the Schur form. 
\begin{proposition}[Different Gohberg-Kaashoek Numbers]\label{GKnonstable} Let us fix matrix $A_0$ and its Schur decomposition  $A_0=U_0T_0U_0^*$.
There exists $K>0$ such that in any neighborhood of $A_0$, i.e. $\{A:\|A-A_0\|<\varepsilon\}$ for any $\varepsilon>0$,
\begin{equation}\label{nonstable}
 \underset{A}{\sup}\ \underset{U,T}{\inf}\ {\|U-U_0\|+\|T-T_0\|}>M>0,
\end{equation} where the supremum is taken over all $A$ in this neighborhood having different Gohberg-Kaashoek numbers from $A_0$ and the infimum is taken over all their Schur factorizations $A=UTU^*$.
\end{proposition}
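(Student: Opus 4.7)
The plan is to exhibit an explicit $3\times 3$ example for which every matrix near $A_0$ with different Gohberg-Kaashoek numbers has a Schur factorization at distance bounded below by an absolute constant from a fixed Schur factorization of $A_0$. The entire argument reduces to a lower bound on the discrepancy of the \emph{first} column of $U$, which comes from the fact that gluing several Jordan blocks into one drastically shrinks the family of one-dimensional invariant subspaces of the perturbed matrix.

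Concretely, I would take
\[
A_0 \;=\; J_2(0)\oplus J_1(0)\;=\;\begin{pmatrix} 0 & 1 & 0 \\ 0 & 0 & 0 \\ 0 & 0 & 0 \end{pmatrix},
\]
which is already upper triangular, and fix the Schur decomposition $U_0=I$, $T_0=A_0$; the Gohberg-Kaashoek partition of $A_0$ at the eigenvalue $0$ is $(2,1)$. Then set
\[
A_\varepsilon \;=\; A_0 + \varepsilon\, e_3 e_1^{T},\qquad \varepsilon>0,
\]
so $\|A_\varepsilon-A_0\|=\varepsilon$ can be made arbitrarily small. A direct computation shows that $A_\varepsilon$ has characteristic polynomial $-\lambda^3$, that $A_\varepsilon^3=0$ while $A_\varepsilon^2\neq 0$, and that $\rank A_\varepsilon=2$; consequently $A_\varepsilon$ is similar to a single Jordan block $J_3(0)$ and has Gohberg-Kaashoek partition $(3)\neq(2,1)$. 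So $A_\varepsilon$ is admissible in the supremum of \eqref{nonstable}.

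To bound the infimum over Schur factorizations of $A_\varepsilon$ from below, I use the fact that in any factorization $A_\varepsilon=UTU^*$ with $T$ upper triangular, the first column $u_1$ of $U$ is a unit eigenvector of $A_\varepsilon$. Since $\kernel A_\varepsilon=\spn(e_3)$, this forces $u_1=e^{i\theta}e_3$ for some $\theta\in\mathbb{R}$, and hence
\[
\|U-U_0\|\;\ge\;\|(U-U_0)e_1\|\;=\;\|u_1-e_1\|\;=\;\sqrt{2},
\]
independently of $\varepsilon$, of $\theta$, and of the remaining freedom in $U,T$. Taking any $M\in(0,\sqrt{2})$ then validates \eqref{nonstable}.

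The principal conceptual subtlety is the non-uniqueness of Schur factorizations of $A_0$: because $\kernel A_0=\spn(e_1,e_3)$ is two-dimensional, $A_0$ admits a whole circle of valid $U_0$'s, and a different choice with first column $e_3$ would \emph{not} give a positive bound. This is precisely why the proposition is stated with a fixed $U_0$, $T_0$ at the outset, rather than optimizing the right-hand side over all Schur factorizations of $A_0$. The underlying phenomenon is that the Jordan-refinement $(2,1)\mapsto(3)$ collapses the one-parameter family of $A_0$-eigenvectors in $\spn(e_1,e_3)$ to the single line $\spn(e_3)$, and this line sits at maximal angle from the first column $e_1$ of the chosen $U_0$.
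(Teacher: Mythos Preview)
The paper does not actually prove this proposition; it is quoted from \cite{MNO} as background. So there is no in-paper argument to compare against, and I can only assess your proof on its own terms.

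Your computation is correct for the chosen triple $(A_0,U_0,T_0)$: the perturbation $A_\varepsilon=A_0+\varepsilon e_3e_1^{T}$ is nilpotent of index $3$, so its GK partition is $(3)\ne(2,1)$; its kernel is $\spn(e_3)$; and hence every Schur factorization of $A_\varepsilon$ has first column $e^{i\theta}e_3$, forcing $\|U-I\|\ge\sqrt{2}$. That cleanly gives $\sup_A\inf_{U,T}\ge\sqrt{2}$ in \eqref{nonstable} for this particular $A_0$.

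Two caveats. First, your opening sentence overstates what you prove: you announce that \emph{every} nearby matrix with different GK numbers has its Schur factorizations bounded away from $(U_0,T_0)$. That is false. For example
\[
A=\begin{pmatrix}\delta&1&0\\0&2\delta&0\\0&0&3\delta\end{pmatrix}
\]
has three simple eigenvalues, hence GK partition $(3)\ne(2,1)$, yet it is already upper triangular and admits the Schur pair $(U,T)=(I,A)$ with $\|U-U_0\|+\|T-T_0\|=O(\delta)$. Fortunately your actual argument never uses this overclaim: a single family $A_\varepsilon$ with $\inf_{U,T}\ge\sqrt{2}$ already bounds the supremum in \eqref{nonstable}, which is all the proposition requires.

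Second, and more seriously, the proposition begins ``Let us fix matrix $A_0$'', which is a universal quantifier: the claim is meant for every $A_0$ (implicitly, every derogatory $A_0$, since for non-derogatory matrices the admissible set is empty and the statement is vacuous---cf.\ Remark~\ref{remark}). One $3\times3$ example therefore does not establish the proposition as stated. Your construction does contain the right idea for the general case---merge two Jordan blocks at a repeated eigenvalue so that the surviving eigenline is transverse to the fixed first column of $U_0$---but turning this into a proof for arbitrary derogatory $A_0$ and arbitrary fixed $(U_0,T_0)$ requires an additional argument that you have not supplied. As it stands, what you have is a correct and illuminating illustration of the mechanism, not a proof of the full statement; the paper itself defers that to \cite{MNO}.
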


Thus the natural question to ask is when the forward suitability is possible. We give the answer  in this paper.

We start with the structure of the original matrix. It is of interest to find the classes of matrices $A_0$ for which the Schur form is always stable. It turns out, one such case is when  $A_0$ is non-derogatory, i.e. only one Jordan block per eigenvalue. We discovered that the following result holds.

\begin{theorem}[Non-derogatory case. H\"older forward stability]\label{nonderogatory}
	Let $A_0$ be a non-derogatory matrix. Then its Schur canonical form is forward H\"older stable, i.e. if $A_0=U_0 T_0U_0^*$ then there
	exist  constants $K,\varepsilon>0$ $($depending on $A_0$ only$)$ such that for all $A$ with $\|A-A_0\|<\varepsilon$ there exists a Schur factorization $U TU^*$ of $A$ such that we have
	\begin{equation*}
\|U-U_0\|+\|T-T_0\|\leq K\|A-A_0\|^{1/n}.
\end{equation*}
\end{theorem}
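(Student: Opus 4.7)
The plan is to work in the Schur basis of $A_0$, reducing to the case $A_0 = T_0$, and to construct a unitary $U$ close to $I$ such that $U^*AU$ is upper triangular and close to $T_0$. The argument splits into a Lipschitz block deflation along the spectral subspaces of the distinct eigenvalues of $A_0$, followed by a H\"older-$1/k_j$ triangularization inside each block; since every Jordan block size $k_j$ is at most $n$, this yields the overall exponent $1/n$.

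For the block deflation, the non-derogatory hypothesis combined with classical Ostrowski/Elsner bounds gives that, for small $\|A-A_0\|$, the spectrum of $A$ splits into clusters with exactly $k_j$ eigenvalues of $A$ within $O(\|A-A_0\|^{1/k_j})$ of each distinct $\lambda_j$. Once these clusters are separated, standard resolvent ($\sin\Theta$-type) estimates make each cluster's spectral subspace $\tilde{\mathcal V}_j$ Lipschitz close in gap to the corresponding spectral subspace $\mathcal V_j$ of $A_0$. Concatenating orthonormal bases of the $\tilde{\mathcal V}_j$ in the order dictated by $T_0$ produces a unitary $V_1 = I + O(\|A - A_0\|)$ for which $V_1^* A V_1$ is block upper triangular, with $j$-th diagonal block $B_j$ satisfying $\|B_j - T_{0,j}\| = O(\|A - A_0\|)$, where $T_{0,j}$ is the $j$-th diagonal block of $T_0$.

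For the within-block step, each $T_{0,j}$ is a non-derogatory $k_j\times k_j$ upper triangular matrix with a single eigenvalue $\lambda_j$ and is therefore similar to $J_{k_j}(\lambda_j)$; after this constant similarity the problem is to triangularize a perturbation $B$ of $J_{k_j}(\lambda_j)$. For generic $A$, $B$ has $k_j$ distinct eigenvalues $\mu_i$, and the Vishik-Lyusternik / Moro-Burke-Dopico asymptotic theory produces eigenvectors with the Vandermonde-like expansion $v_i \approx \bigl(1,\, \mu_i - \lambda_j,\, (\mu_i-\lambda_j)^2,\dots\bigr)$. Although each individual $v_i$ degenerates onto $e_1$, the nested spans $\mathrm{span}(v_1,\dots,v_i)$ still approach the Jordan flag $\mathrm{span}(e_1,\dots,e_i)$ with gap distance $O(\|A-A_0\|^{1/k_j})$ thanks to this Vandermonde structure. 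Gram-Schmidt on the $v_i$ (with phases chosen to align with the Jordan basis) then gives a unitary $V_{j,2} = I + O(\|A-A_0\|^{1/k_j})$ that triangularizes $B_j$, and the standard expansion $V_{j,2}^* B_j V_{j,2} - T_{0,j} = V_{j,2}^*(B_j - T_{0,j})V_{j,2} + (V_{j,2}^* T_{0,j} V_{j,2} - T_{0,j})$ gives $\|V_{j,2}^* B_j V_{j,2} - T_{0,j}\| = O(\|A-A_0\|^{1/k_j})$. Non-diagonalizable perturbed blocks are handled by density. Finally $U = V_1\cdot\mathrm{diag}(V_{1,2},\dots,V_{p,2})$ is the desired unitary.

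The main obstacle is the within-block step. Because every perturbed eigenvector of a Jordan block collapses onto the same vector, one cannot match them one-to-one with the Jordan-basis vectors; the correct rate $1/k_j$ is obtained only by using the full Vandermonde structure of all perturbed eigenvectors simultaneously, since a naive one-at-a-time deflation compounds the exponent to something much worse (on the order of $1/(k_j!)$). The phase choice in Gram-Schmidt and the reduction to non-generic (non-diagonalizable) $B_j$ by continuity are minor technical points.
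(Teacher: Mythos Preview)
Your approach is quite different from the paper's. The paper does not argue directly: it observes (Remark~\ref{remark}) that a non-derogatory matrix has Gohberg--Kaashoek numbers $m(A_0)=[n,0,\dots,0]$, and that every sufficiently small perturbation $A$ necessarily has the same GK numbers; Theorem~\ref{nonderogatory} then drops out as a special case of Theorem~\ref{sameGK}. That theorem is proved by constructing a Frobenius-type (invariant-factor) basis of $T_0$ adapted to the \emph{given} diagonal ordering (Lemma~\ref{invfactors}) and perturbing the whole basis at once, followed by QR; there is no preliminary splitting into spectral blocks.

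Your two-stage plan---Lipschitz deflation by spectral subspaces, then H\"older triangularization inside each block via the Vandermonde structure of perturbed eigenvectors---is natural, and the within-block analysis is essentially correct. But there is a structural gap in the deflation step. You implicitly assume that the fixed Schur form $T_0$ has its equal eigenvalues appearing contiguously, so that ``the $j$-th diagonal block $T_{0,j}$'' exists. This need not hold for a non-derogatory matrix: for example
\[
T_0=\begin{bmatrix}0&1&0\\0&1&1\\0&0&0\end{bmatrix}
\]
is non-derogatory with diagonal $(0,1,0)$. Here the root subspace for the eigenvalue $0$ is $\spn\bigl(e_1,\,e_2-e_3\bigr)$, not a coordinate subspace, so an orthonormal basis of it is far from $\{e_1,e_2\}$; concatenating bases of the root subspaces ``in the order dictated by $T_0$'' does not even make sense, and any block-deflation unitary you build is bounded away from $I$. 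Your construction would produce a Schur form of $A$ whose diagonal is close to $(0,0,1)$, not to $(0,1,0)$. One might try to repair this by first unitarily reordering $T_0$ to a grouped form $T_0'=W^*T_0W$, running your argument there, and transporting back via $W$; but $W^*T'W$ is in general not upper triangular, so this does not yield a Schur factorization close to $(U_0,T_0)$. Handling an arbitrary diagonal ordering is precisely what forces the paper into the Frobenius-basis construction of Lemma~\ref{invfactors}.

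A smaller point: when you pass to $J_{k_j}(\lambda_j)$ by a ``constant similarity'' $P$, you need $P$ upper triangular (possible, since both $T_{0,j}$ and $J_{k_j}(\lambda_j)$ are upper triangular with the same diagonal) and then a QR step to convert a unitary triangularizing $P^{-1}B_jP$ into one triangularizing $B_j$ with triangular part close to $T_{0,j}$; as written, triangularizing $P^{-1}B_jP$ does not immediately give a Schur form of $B_j$.
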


What happens if we consider the structure preserving perturbation? Before answering this question let us talk about how to quantify the structure of a matrix and introduce a concept that will serve this purpose,
the Gohberg-Kaashoek numbers
(see \cite{GK78,O89,MO} for details).

	Let us start by considering $A\in\mathbb{C}^{n\times n}$. Denote by $\sigma(A)$ the set of all its eigenvalues and  by $m_1(A,\lambda)\le m_2(A,\lambda)\le\dots\le m_t(A,\lambda)$ the sizes of all blocks corresponding to $\lambda\in\sigma(A)$ in the Jordan form of $A$. We set $m_i(A,\lambda)=0$ ($i=t+1,\dots,n$) for convenience. The numbers
	$$m_i(A)=\sum_{\lambda\in\sigma(A)}m_i(A,\lambda)$$
	are called \textit{the Gohberg-Kaashoek (GK) numbers}. 

Let $m=\begin{bmatrix}
m_1,m_2, \ldots,m_n
\end{bmatrix}^\top$ be a vector with integer entries with  $m_i\geq m_{i+1}$ for $i=1,\dots,n-1$. The vector $k=\begin{bmatrix}
k_1,k_2,\ldots,k_n
\end{bmatrix}^\top $ with
$
k_i=\underset{1\leq l\leq n}{\max}\{l:m_l\geq i\}
$  is called {\textit{dual}} to $m$.

In terms of the Gohberg-Kaashoek numbers $m_j$'s it means that if we consider the following matrix 
\begin{center}	\vskip-15pt
\definecolor{aqaqaq}{rgb}{0.6274509803921569,0.6274509803921569,0.6274509803921569}
\definecolor{cqcqcq}{rgb}{0.7529411764705882,0.7529411764705882,0.7529411764705882}
\begin{tikzpicture}[line cap=round,line join=round,>=triangle 45,x=1cm,y=1cm]
\clip(-1,-1.65) rectangle (13,5.3);
\fill[line width=2pt,color=cqcqcq,fill=cqcqcq,fill opacity=0.1] (1,4) -- (2,4) -- (2,5) -- (1,5) -- cycle;
\fill[line width=2pt,color=cqcqcq,fill=cqcqcq,fill opacity=0.1] (2,3) -- (3,3) -- (3,4) -- (2,4) -- cycle;
\fill[line width=2pt,color=cqcqcq,fill=cqcqcq,fill opacity=0.1] (3,2.4) -- (3.6,2.4) -- (3.6,3) -- (3,3) -- cycle;
\fill[line width=2pt,color=cqcqcq,fill=cqcqcq,fill opacity=0.1] (3.6,1.4) -- (4.6,1.4) -- (4.6,2.4) -- (3.6,2.4) -- cycle;
\fill[line width=2pt,color=cqcqcq,fill=cqcqcq,fill opacity=0.1] (5.2,-0.2) -- (6.2,-0.2) -- (6.2,0.8) -- (5.2,0.8) -- cycle;
\fill[line width=2pt,color=cqcqcq,fill=cqcqcq,fill opacity=0.1] (4.6,0.8) -- (5.2,0.8) -- (5.2,1.4) -- (4.6,1.4) -- cycle;
\fill[line width=2pt,color=cqcqcq,fill=cqcqcq,fill opacity=0.1] (6.2,-0.8) -- (6.8,-0.8) -- (6.8,-0.2) -- (6.2,-0.2) -- cycle;
\fill[line width=2pt,color=cqcqcq,fill=cqcqcq,fill opacity=0.1] (6.8,-1.4) -- (7.4,-1.4) -- (7.4,-0.8) -- (6.8,-0.8) -- cycle;
\fill[line width=0.8pt,color=cqcqcq,opacity=0.1] (1,2.4) -- (3.6,2.4) -- (3.6,5) -- (1,5) -- cycle;
\fill[line width=2pt,color=cqcqcq,opacity=0.1] (3.6,0.8) -- (5.2,0.8) -- (5.2,2.4) -- (3.6,2.4) -- cycle;
\fill[line width=2pt,color=cqcqcq,opacity=0.1] (5.2,-0.8) -- (6.8,-0.8) -- (6.8,0.8) -- (5.2,0.8) -- cycle;
\draw [line width=2pt,color=cqcqcq] (1,4)-- (2,4);
\draw [line width=2pt,color=cqcqcq] (2,4)-- (2,5);
\draw [line width=2pt,color=cqcqcq] (2,5)-- (1,5);
\draw [line width=2pt,color=cqcqcq] (1,5)-- (1,4);
\draw [line width=2pt,color=cqcqcq] (2,3)-- (3,3);
\draw [line width=2pt,color=cqcqcq] (3,3)-- (3,4);
\draw [line width=2pt,color=cqcqcq] (3,4)-- (2,4);
\draw [line width=2pt,color=cqcqcq] (2,4)-- (2,3);
\draw [line width=2pt,color=cqcqcq] (3,2.4)-- (3.6,2.4);
\draw [line width=2pt,color=cqcqcq] (3.6,2.4)-- (3.6,3);
\draw [line width=2pt,color=cqcqcq] (3.6,3)-- (3,3);
\draw [line width=2pt,color=cqcqcq] (3,3)-- (3,2.4);
\draw [line width=2pt,color=cqcqcq] (3.6,1.4)-- (4.6,1.4);
\draw [line width=2pt,color=cqcqcq] (4.6,1.4)-- (4.6,2.4);
\draw [line width=2pt,color=cqcqcq] (4.6,2.4)-- (3.6,2.4);
\draw [line width=2pt,color=cqcqcq] (3.6,2.4)-- (3.6,1.4);
\draw [line width=2pt,color=cqcqcq] (5.2,-0.2)-- (6.2,-0.2);
\draw [line width=2pt,color=cqcqcq] (6.2,-0.2)-- (6.2,0.8);
\draw [line width=2pt,color=cqcqcq] (6.2,0.8)-- (5.2,0.8);
\draw [line width=2pt,color=cqcqcq] (5.2,0.8)-- (5.2,-0.2);
\draw [line width=2pt,color=cqcqcq] (4.6,0.8)-- (5.2,0.8);
\draw [line width=2pt,color=cqcqcq] (5.2,0.8)-- (5.2,1.4);
\draw [line width=2pt,color=cqcqcq] (5.2,1.4)-- (4.6,1.4);
\draw [line width=2pt,color=cqcqcq] (4.6,1.4)-- (4.6,0.8);
\draw [line width=2pt,color=cqcqcq] (6.2,-0.8)-- (6.8,-0.8);
\draw [line width=2pt,color=cqcqcq] (6.8,-0.8)-- (6.8,-0.2);
\draw [line width=2pt,color=cqcqcq] (6.8,-0.2)-- (6.2,-0.2);
\draw [line width=2pt,color=cqcqcq] (6.2,-0.2)-- (6.2,-0.8);
\draw [line width=2pt,color=cqcqcq] (6.8,-1.4)-- (7.4,-1.4);
\draw [line width=2pt,color=cqcqcq] (7.4,-1.4)-- (7.4,-0.8);
\draw [line width=2pt,color=cqcqcq] (7.4,-0.8)-- (6.8,-0.8);
\draw [line width=2pt,color=cqcqcq] (6.8,-0.8)-- (6.8,-1.4);
\draw (0.9886666666666661,5.1) node[anchor=north west] {$ \begin{matrix} \lambda&1\\ 0&\lambda \end{matrix}$};
\draw (1.9686666666666672,4.1) node[anchor=north west] {$\begin{matrix}\mu&1\\0&\mu\end{matrix}$};
\draw (3.0686666666666684,2.93) node[anchor=north west] {$\eta$};
\draw (3.588666666666669,2.5) node[anchor=north west] {$ \begin{matrix} \lambda&1\\ 0&\lambda \end{matrix}$};
\draw (4.64,1.35) node[anchor=north west] {$ \eta$};
\draw (5.168666666666671,0.9) node[anchor=north west] {$\begin{matrix}\lambda&1\\0&\lambda\end{matrix}$};
\draw (6.25,-0.25) node[anchor=north west] {$ \eta$};
\draw (6.88,-0.8) node[anchor=north west] {$\lambda$};
\draw [line width=1pt] (0.8,5.2)-- (0.8,-1.6);
\draw [line width=1pt] (0.8,-1.6)-- (1,-1.6);
\draw [line width=1pt] (0.8,5.2)-- (1,5.2);
\draw [line width=1pt] (7.6,5.2)-- (7.6,-1.6);
\draw [line width=1pt] (7.6,-1.6)-- (7.4,-1.6);
\draw [line width=1pt] (7.6,5.2)-- (7.4,5.2);
\draw (-0.2313333333333352,2) node[anchor=north west] {$A=$};
\draw [line width=0.8pt,color=aqaqaq] (1,5)-- (10,5);
\draw [line width=0.8pt,color=aqaqaq] (3,2.4)-- (10,2.4);
\draw [line width=0.8pt,color=cqcqcq] (4.6,0.8)-- (10,0.8);
\draw [line width=0.8pt,color=aqaqaq] (6.2,-0.8)-- (10,-0.8);
\draw [line width=0.8pt,color=aqaqaq] (6.8,-1.4)-- (10,-1.4);
\draw [<->,line width=0.8pt] (9,2.4) -- (9,5);
\draw [<->,line width=0.8pt] (9,0.8) -- (9,2.4);
\draw [<->,line width=0.8pt] (9,-0.8) -- (9,0.8);
\draw [<->,line width=0.8pt] (9,-1.4) -- (9,-0.8);
\draw (9.088666666666676,4) node[anchor=north west] {$m_1(A)=5$};
\draw (9.088666666666676,2) node[anchor=north west] {$m_2(A)=3$};
\draw (9.088666666666676,0.3) node[anchor=north west] {$m_3(A)=3$};
\draw (9.048666666666675,-0.75) node[anchor=north west] {$m_4(A)=1$};
\draw [line width=0.8pt,color=cqcqcq] (1,2.4)-- (3.6,2.4);
\draw [line width=0.8pt,color=cqcqcq] (3.6,2.4)-- (3.6,5);
\draw [line width=0.8pt,color=cqcqcq] (3.6,5)-- (1,5);
\draw [line width=0.8pt,color=cqcqcq] (1,5)-- (1,2.4);
\draw [line width=0.8pt,color=cqcqcq] (3.6,0.8)-- (5.2,0.8);
\draw [line width=0.8pt,color=cqcqcq] (5.2,0.8)-- (5.2,2.4);
\draw [line width=0.8pt,color=cqcqcq] (5.2,2.4)-- (3.6,2.4);
\draw [line width=0.8pt,color=cqcqcq] (3.6,2.4)-- (3.6,0.8);
\draw [line width=0.8pt,color=cqcqcq] (5.2,-0.8)-- (6.8,-0.8);
\draw [line width=0.8pt,color=cqcqcq] (6.8,-0.8)-- (6.8,0.8);
\draw [line width=0.8pt,color=cqcqcq] (6.8,0.8)-- (5.2,0.8);
\draw [line width=0.8pt,color=cqcqcq] (5.2,0.8)-- (5.2,-0.8);
\end{tikzpicture}
\end{center}
we look at the {\bf size} of the Jordan blocks for each eigenvalue and group them together in decreasing order to get the bigger blocks. Hence, $m(A)=[
5,3,3,1,0,0,0,0,0,0,0,0]^\top$ whereas the entries of the dual vector are going to be the {\bf number} of bigger blocks with the size greater or equal to $j=1,2,\ldots$, i.e.  $k(A)=[4,3,3,1,1,0,0,0,0,0,0,0]^\top$.

  If $A$ has the same Jordan structure as $A_0$ then we can get the Lipschitz stability of the Schur form. Recall if $A$ has $q$ distinct eigenvalues then
$$\Omega(A)=\{(m_i(A,\lambda_j))_{i=1}^n,j=1,\dots, q\}$$
is called the \textit{Jordan structure} of $A$ and the set of all matrices of the Jordan structure $\Omega$ we denote by $\mathcal{J}(\Omega)$.

\begin{theorem}[Same Jordan structure. Lipschitz forward stability]\label{lipschitz}
	Let $A_0=U_0 T_0U_0^*$. Then, there
	exist constants $K,\varepsilon>0$ $($depending on $A_0$ only$)$ such that for all $A\in\mathcal{J}(\Omega(A_0))$ with $\|A-A_0\|<\varepsilon$ there exists a Schur factorization $U TU^*$ of $A$  such that
	\begin{equation}\label{lipschitzeq}
	\|U-U_0\|+\|T-T_0\|\leq K\|A-A_0\|.
	\end{equation}
	\end{theorem}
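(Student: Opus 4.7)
The plan is to construct the Schur factorization of $A$ block-by-block, exploiting the fact that when the Jordan structure is preserved both the spectral projectors and the kernel filtrations of the nilpotent parts depend Lipschitz continuously on the matrix. It suffices to construct $U$ with $\|U-U_0\|\le K\|A-A_0\|$, since then $T=U^*AU$ automatically satisfies the corresponding bound.

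First I would isolate the spectral components. Let $\lambda_1,\dots,\lambda_q$ be the distinct eigenvalues of $A_0$. Because $A\in\mathcal{J}(\Omega(A_0))$ has the same Jordan type, its eigenvalues cluster around the $\lambda_j$ at the linear rate $O(\|A-A_0\|)$; no fractional exponents appear because no merging or splitting of clusters is possible under the fixed Jordan structure. For sufficiently small $\varepsilon$, fixed disjoint closed contours $\gamma_j$ around each $\lambda_j$ continue to separate the clusters for every $A$ in the neighborhood, and the Riesz projectors
\begin{equation*}
P_j(A) = \frac{1}{2\pi i}\oint_{\gamma_j}(zI-A)^{-1}\,dz
\end{equation*}
depend analytically, and hence Lipschitz continuously, on $A$. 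Projecting the appropriate columns of $U_0$ by $P_j(A)$ and orthonormalizing by a single Gram-Schmidt step produces a unitary $V(A)$, Lipschitz in $A$, that block-diagonalizes $A$ into spectral blocks $A^{(j)}(A)$ aligned with the spectral decomposition of $A_0$ induced by $U_0$.

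Inside each spectral block I set $N = A^{(j)}(A) - \mu_j I$, where $\mu_j$ is the eigenvalue of $A$ in the $j$-th cluster, and need to triangularize the nilpotent matrix $N$. Since the Jordan type of $N$ is fixed on $\mathcal{J}(\Omega(A_0))$, the ranks of $N^k$, and hence the dimensions of $\ker N^k$, are constants, so the filtration $\ker N\subset\ker N^2\subset\cdots$ depends Lipschitz continuously on $N$ in the gap metric. Choosing an orthonormal basis adapted to this filtration, starting from the $U_0$-columns projected onto each successive layer and applying Gram-Schmidt, yields a unitary $W^{(j)}$ that triangularizes $N$ with Lipschitz dependence on $A$. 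Assembling $V(A)$ with the block-diagonal $\operatorname{diag}(W^{(j)})$ produces the required Schur factorization.

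The main obstacle is controlling the intra-block refinements: within each quotient $\ker N^{k+1}/\ker N^k$ there is an ambiguity the size of a unitary group of positive dimension, and a na\"ive choice of basis destroys Lipschitz continuity of the full $U$. The resolution is that $U_0$ already realizes a Schur form of $A_0$ adapted to the kernel filtration of its nilpotent parts, so its columns provide a canonical initial frame that, after projection onto $\ker N^{k+1}$ and Gram-Schmidt against $\ker N^k$, yields Lipschitz-continuous bases at every level. This requires the projected frames to remain well-conditioned for small $\varepsilon$, which follows from continuity of the projectors together with the fixed dimensions of each kernel layer; and it is precisely this preservation of the Gohberg-Kaashoek numbers that prevents rank drops and yields the Lipschitz, rather than merely H\"older, exponent.
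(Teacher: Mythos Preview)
Your ingredients are sound---Riesz projectors and the subspaces $\ker N^k$ do vary Lipschitz continuously when the Jordan type is fixed---but the assembly has a genuine gap: your construction produces a Schur basis adapted to the spectral decomposition and, within each block, to the kernel filtration of the nilpotent part, yet the \emph{given} $(U_0,T_0)$ is in general adapted to neither. If the diagonal of $T_0$ reads $(\lambda_1,\lambda_2,\lambda_1)$ with $\lambda_1\neq\lambda_2$, block-diagonalizing $A$ forces the two $\mu_1$-columns to be adjacent and cannot give a $U$ close to $U_0$; if instead you project each $u_k$ by its own spectral projector and Gram--Schmidt in the original order, $U$ stays close to $U_0$ but the resulting flag is not $A$-invariant (the first projected vector lies in the $\mu_1$-generalized eigenspace yet need not be an eigenvector). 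The same failure recurs inside a single block: for $N_0=\left[\begin{smallmatrix}0&1&0\\0&0&0\\0&0&0\end{smallmatrix}\right]$ one has $\ker N_0=\mathrm{span}(e_1,e_3)$, so the kernel filtration is not a coarsening of the given flag $\mathrm{span}(e_1)\subset\mathrm{span}(e_1,e_2)\subset\mathbb{C}^3$; your filtration-adapted basis $(e_1,e_3,e_2)$ yields a triangular form a fixed permutation away from $N_0$, not $O(\|A-A_0\|)$-close. Thus the claim that ``$U_0$ already realizes a Schur form of $A_0$ adapted to the kernel filtration of its nilpotent parts'' is false in general.

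The paper avoids this by never leaving the prescribed flag. Working with $T_0$ and $B=U_0^*AU_0$, it locates an eigenvector of $B$ for the eigenvalue near $(T_0)_{11}$ sitting in a Jordan chain of the same length as the chain through $e_1$ in $T_0$; a Lipschitz Jordan-basis stability result from \cite{BOP} places this eigenvector $O(\|B-T_0\|)$-close to $e_1$. That vector becomes the first column of a unitary $V_1\approx I$, and Lemma~\ref{reducingJordan} certifies that the trailing $(n{-}1)\times(n{-}1)$ blocks of $T_0$ and $V_1^*BV_1$ again share the same Jordan structure, so one recurses (Lemma~\ref{recursion}). This column-by-column construction follows the given ordering of the diagonal of $T_0$ and the given nilpotent shape, which is exactly what your global projector/filtration scheme fails to do.
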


%The proof of the following fact is done by considering the same type of dimension reductions:
%\begin{equation*}
%H_1^*A_1H_1=\left[\begin{array}{c|c}
%\lambda_1&\begin{matrix}
%\star&\cdots&\star
%\end{matrix}\\ \hline\\
%\begin{matrix}
%0\\ \vdots \\0
%\end{matrix}&\text{\huge $A_2$}
%\end{array}\right],
%\end{equation*}
%where $H_1$ is a unitary Hessenberg matrix and then by comparing the Jordan structures $\Omega(A_1)$ and $\Omega(A_2)$. For the set of such submatrices $A_1,\ldots,A_n$ we call the chain   $\Omega(A_1)\rightarrow\Omega(A_2)\rightarrow\ldots\rightarrow\Omega(A_n)$  \textit{the Gohberg-Kaashoek profile} of $A_1$.

If we extend the class of perturbation to those having the same GK numbers as the original matrix, we get another forward stability result, this time with the H\"older type bound.

\begin{theorem}[Same GK numbers. H\"older forward stability]\label{sameGK}
	Let $A_0$ be given. Any Schur canonical form is forward H\"older stable in the class of matrices $A$ having the same Gohberg-Kaashoek numbers as $A_0$. This means that there
	exist  constants $K,varepsilon>0$ $($depending on $A_0$ only$)$ such that for all $A$ with the same GK numbers as $A_0$ and $\|A-A_0\|<\varepsilon$ there exists a Schur factorization $U TU^*$ of $A$ such that the following inequality holds.	\begin{equation}\label{sameGKeq}
	\|U-U_0\|+\|T-T_0\|\leq K\|A-A_0\|^{1/n}.
	\end{equation} 
\end{theorem}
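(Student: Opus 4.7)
The plan is to reduce to Theorem~\ref{lipschitz} by constructing an intermediate matrix $\tilde A$ that has the same Jordan structure as $A_0$ and satisfies $\|\tilde A-A\|\le C\|A-A_0\|^{1/n}$; the $1/n$ exponent enters through Lidskii- and Elsner-type bounds on eigenvalue perturbations in the presence of Jordan blocks.

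I would proceed in four steps. \textbf{(1) Clustering.} Elsner's theorem places $\sigma(A)$ inside a union of disks of radius $C\|A-A_0\|^{1/n}$ around the distinct eigenvalues $\lambda_1,\dots,\lambda_q$ of $A_0$, with matching total algebraic multiplicities inside each disk. \textbf{(2) Spectral decomposition.} Contour-integrating $(zI-A)^{-1}$ along circles of radius $\asymp\|A-A_0\|^{1/n}$ enclosing each cluster yields Riesz projectors $\tilde P_i$ of $A$ with $\|\tilde P_i-P_i\|\le C\|A-A_0\|^{1/n}$, producing an $A$-invariant decomposition $\mathbb C^n=\bigoplus_i\tilde P_i\mathbb C^n$ close in gap metric to the one induced by $A_0$. \textbf{(3) Construction of $\tilde A$.} On each summand, first shift the local spectrum of $A$ to the single point $\lambda_i$ (cost $O(\|A-A_0\|^{1/n})$), then adjust the local Jordan block pattern to match that of $A_0$ at $\lambda_i$. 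The same-GK hypothesis forces the local Jordan structure of $A$ --- which by the one-sided splitting property of Jordan blocks under small perturbations can only refine $A_0$'s --- to reassemble precisely to $A_0$'s GK profile; this refinement is undone by an arbitrarily small off-diagonal perturbation merging blocks within each cluster. The resulting $\tilde A$ has the same Jordan structure as $A_0$ and $\|\tilde A-A\|\le C\|A-A_0\|^{1/n}$. \textbf{(4) Transfer.} Theorem~\ref{lipschitz} applied to $A_0$ and $\tilde A$ yields a Schur factorization $\tilde A=\tilde U\tilde T\tilde U^*$ with $\|\tilde U-U_0\|+\|\tilde T-T_0\|\le K\|\tilde A-A_0\|\le C\|A-A_0\|^{1/n}$; a small unitary correction triangularizing $\tilde U^*A\tilde U$ then produces a Schur factorization of $A$ for which the triangle inequality gives~(\ref{sameGKeq}).

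The main obstacle lies in Step~(3). The hypothesis equates only the global sums $m_i(A)=\sum_{\lambda}m_i(A,\lambda)$, not per-eigenvalue sizes, so one must first establish a local-to-global rigidity: inside the $\|A-A_0\|^{1/n}$-neighborhood of $A_0$, equality of GK numbers forces the per-cluster GK profiles of $A$ to agree with $A_0$'s per-eigenvalue profiles. This is a combinatorial consequence of Gohberg--Kaashoek majorization combined with the one-sided splitting of Jordan blocks under small perturbations. Once this identification is secured, all quantitative estimates are controlled by Lidskii-type bounds of exponent $1/n$ and the remaining analysis is essentially routine.
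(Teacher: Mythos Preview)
Your approach is genuinely different from the paper's, but Step~(5) makes the argument circular. Notice that $\tilde A=A_0$ already satisfies everything you ask for in Step~(3): it has the Jordan structure of $A_0$ and $\|A_0-A\|\le C\|A_0-A\|^{1/n}$ for small $\|A-A_0\|$. With that choice Step~(4) is vacuous ($\tilde U=U_0$, $\tilde T=T_0$) and Step~(5) reads: \emph{triangularize $U_0^*AU_0$, a matrix $O(\|A-A_0\|)$-close to $T_0$, by a near-identity unitary}. That is precisely the theorem you are trying to prove, and Proposition~\ref{GKnonstable} shows it fails without hypotheses. For a general $\tilde A$ the situation is no better: $\tilde U^*A\tilde U$ is only $O(\|A-A_0\|^{1/n})$-close to the triangular $\tilde T$, and you have no mechanism for the ``small unitary correction'' other than invoking the result itself. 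The only way to avoid this would be to build $\tilde A$ so that it shares a complete invariant flag with $A$ (making $\tilde U^*A\tilde U$ automatically triangular), but your Step~(3) construction --- spectral shift plus ``merging blocks by an arbitrarily small off-diagonal perturbation'' --- does not do this. For instance, with $A_0=\bigl(\begin{smallmatrix}0&1\\0&0\end{smallmatrix}\bigr)$ and $A=\bigl(\begin{smallmatrix}\varepsilon&1\\0&-\varepsilon\end{smallmatrix}\bigr)$, the Riesz nilpotent part of $A$ on the single cluster is $0$, so the shifted matrix is $0$, which has the wrong Jordan structure; any merge that restores a $2\times2$ block alters the invariant flag and throws the work back onto Step~(5).

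The paper avoids this trap by \emph{not} reducing to Theorem~\ref{lipschitz}. It works directly with $T_0$ and $B=U_0^*AU_0$, writes $T_0=S_0\widehat{J}_0S_0^{-1}$ where $S_0$ is an \emph{upper triangular} invertible matrix and $\widehat{J}_0$ is an upper triangular permutation of the invariant-factor (Frobenius) form of $T_0$ (Lemma~\ref{invfactors}), and then builds for $B$ a nearby Frobenius basis $S$ chain by chain, using Proposition~\ref{kernel} on $\ker\prod_j(B-\mu_jI)$ and the H\"older eigenvalue bound of Proposition~\ref{eigstab}; this gives $\|S-S_0\|\le K\|B-T_0\|^{1/n}$. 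The key device is then the Lipschitz stability of QR: since $S_0$ is already upper triangular, its QR factorization is $S_0=I\cdot S_0$, so $S=QR$ with $\|Q-I\|+\|R-S_0\|\le K\|S-S_0\|$. Setting $U=U_0Q$ and $T=R\widehat{J}R^{-1}$ gives an honest Schur factorization of $A$ with the required bound, and no second triangularization step is needed.
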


Combining this result with Proposition \ref{GKnonstable}, we completely characterize the forward stability of the Schur form depending on the structure of the perturbation. Moreover, in \cite{MNO} the backward stability result was introduced. Hence, we conclude the investigation of the stability of the Schur canonical form.

{\bf Structure of the paper.}

Section 2 is devoted to some facts from the perturbation theory and theory of gaps/semigaps that are the backbone of our discourse.

You can find proof of Theorem~\ref{lipschitz} in Section 3 and proof of Theorem~\ref{sameGK} in Section 4. We introduce some useful technical lemmas in both Section 3 and Section 4 to aid us in proving main results. 

Note that Theorem~\ref{nonderogatory} (the non-derogatory case) is the direct consequence of Theorem~\ref{sameGK} as discussed in Remark \ref{remark}. However, historically this was the first result we proved before generalizing it. So it deserves its first place among the theorems we present in this paper.

\definecolor{yqyqyq}{rgb}{0.5019607843137255,0.5019607843137255,0.5019607843137255}
\definecolor{aqaqaq}{rgb}{0.6274509803921569,0.6274509803921569,0.6274509803921569}
\noindent{
\begin{tikzpicture}[line cap=round,line join=round,>=triangle 45,x=1.85cm,y=1.7cm]
\clip(-6.6,-2.2) rectangle (1,5.1);
\draw [line width=2pt] (-4.5,3)-- (-4.5,2);
\draw [line width=2pt] (-4.5,2)-- (-1.5,2);
\draw [line width=2pt] (-1.5,2)-- (-1.5,3);
\draw [line width=2pt] (-1.5,3)-- (-4.5,3);
\draw [line width=2pt] (-4.5,1)-- (-4.5,0);
\draw [line width=2pt] (-4.5,0)-- (-1.5,0);
\draw [line width=2pt] (-1.5,0)-- (-1.5,1);
\draw [line width=2pt] (-1.5,1)-- (-4.5,1);
\draw [line width=2pt,color=aqaqaq] (-3.75,5)-- (-3.75,4);
\draw [line width=2pt,color=aqaqaq] (-3.75,4)-- (-2.25,4);
\draw [line width=2pt,color=aqaqaq] (-2.25,4)-- (-2.25,5);
\draw [line width=2pt,color=aqaqaq] (-2.25,5)-- (-3.75,5);
\draw [line width=2pt,color=aqaqaq] (-6.5,4)-- (-6.5,3);
\draw [line width=2pt,color=aqaqaq] (-6.5,3)-- (-5,3);
\draw [line width=2pt,color=aqaqaq] (-5,3)-- (-5,4);
\draw [line width=2pt,color=aqaqaq] (-5,4)-- (-6.5,4);
\draw [line width=2pt,color=yqyqyq] (-1,4)-- (-1,3);
\draw [line width=2pt,color=yqyqyq] (-1,3)-- (0.5,3);
\draw [line width=2pt,color=yqyqyq] (0.5,3)-- (0.5,4);
\draw [line width=2pt,color=yqyqyq] (0.5,4)-- (-1,4);
\draw [line width=2pt,color=aqaqaq] (-6.5,0)-- (-6.5,-1);
\draw [line width=2pt,color=aqaqaq] (-6.5,-1)-- (-5,-1);
\draw [line width=2pt,color=aqaqaq] (-5,-1)-- (-5,0);
\draw [line width=2pt,color=aqaqaq] (-5,0)-- (-6.5,0);
\draw [line width=2pt] (-4,-1)-- (-4,-2);
\draw [line width=2pt] (-4,-2)-- (-2,-2);
\draw [line width=2pt] (-2,-2)-- (-2,-1);
\draw [line width=2pt] (-2,-1)-- (-4,-1);
\draw [line width=2pt,color=yqyqyq] (-1,0)-- (-1,-1);
\draw [line width=2pt,color=yqyqyq] (-1,-1)-- (0.5,-1);
\draw [line width=2pt,color=yqyqyq] (0.5,-1)-- (0.5,0);
\draw [line width=2pt,color=yqyqyq] (0.5,0)-- (-1,0);
\draw (-3.7,4.9) node[anchor=north west] {\parbox{2.2 cm}{  \small  {\bf Lemma \ref{reducingJordan}} \\   GK Numbers of truncation}};
\draw (-1.05,3.9) node[anchor=north west] {\parbox{2.7 cm}{   \small   {\bf Proposition 2.1}\\    Lipschitz stability of eigenvalues}};
\draw (-6.5,3.9) node[anchor=north west] {\parbox{2.3 cm}{   \small   {\bf Lemma \ref{recursion}}\\   Triangular matrix case}};
\draw (-4,2.9) node[anchor=north west] {\parbox{4.3 cm}{ \small   {\bf Theorem 1.3} \\Same Jordan Structure\\  {\it Proof in  Section 3}}};
\draw [<-,shift={(-5,1)},line width=1pt]  plot[domain=0.9272952180016122:1.5707963267948966,variable=\t]({1*2.5*cos(\t r)+0*2.5*sin(\t r)},{0*2.5*cos(\t r)+1*2.5*sin(\t r)});
\draw [->,shift={(-1,1)},line width=1pt]  plot[domain=1.5707963267948966:2.2156816647927595,variable=\t]({1*2.4961023698111062*cos(\t r)+0*2.4961023698111062*sin(\t r)},{0*2.4961023698111062*cos(\t r)+1*2.4961023698111062*sin(\t r)});
\draw [->,line width=1pt] (-3.75,4.5) -- (-5,3.7);
\draw [->,line width=1pt] (-3,0) -- (-3,-1);
\draw [->,shift={(-5,2)},line width=1pt]  plot[domain=4.71238898038469:5.339880761830046,variable=\t]({1*2.4729453202792477*cos(\t r)+0*2.4729453202792477*sin(\t r)},{0*2.4729453202792477*cos(\t r)+1*2.4729453202792477*sin(\t r)});
\draw [<-,shift={(-1,2)},line width=1pt]  plot[domain=4.0679634881877735:4.71238898038469,variable=\t]({1*2.501735490676131*cos(\t r)+0*2.501735490676131*sin(\t r)},{0*2.501735490676131*cos(\t r)+1*2.501735490676131*sin(\t r)});
\draw (-4,0.9) node[anchor=north west] {\parbox{4.3 cm}{ \small   {\bf Theorem 1.4} \\Same GK Numbers\\  {\it Proof in  Section 4}}};
\draw (-3.9,-1.1) node[anchor=north west] {\parbox{4.3 cm}{ \small   {\bf Theorem 1.2} \\Non-deregatory case\\  {\it see Remark 4.4}}};
\draw (-6.5,0) node[anchor=north west] {\parbox{2.3 cm}{   \small   {\bf Lemma 4.2}\\   Triangular Jordan decomposition}};;
\draw (-1.05,-0.1) node[anchor=north west] {\parbox{2.7 cm}{   \small   {\bf Proposition 2.2}\\    H\"older stability of eigenvalues}};
\end{tikzpicture}}

\section{Auxiliary Results}

\subsection{Stability of Eigenvalues}

  Note that  entries on the main diagonal of $T$ in the Schur decomposition are the eigenvalues of $A$. That is why the eigenvalues stability results give us the confidence to consider stability of the Schur decomposition.

The following result can be found in \cite{BOP}.
\begin{proposition} \label{dimeq}
		Let $A_0$ be an $n\times n$ matrix and $\{\lambda_1,\dots,\lambda_n\}$ be its eigenvalues, and $A$ being its perturbation with $\|A-A_0\|<\varepsilon$ for sufficiently small $\varepsilon$ depending on $A_0$ and the eigenvalues $\mu_j$. If the number of eigenvalues of $A_0$ is the same as of $A$, then there is a certain ordering of them such that for some positive $K=K(A_0)$ 
	$$|\mu_i-\lambda_i|\leq K\|A-A_0\|,\quad i=1,2,\dots,|\sigma(A_0)|.$$
\end{proposition}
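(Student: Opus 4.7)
The plan is to use the Riesz spectral projection technique. Let $\lambda_1,\dots,\lambda_q$ be the distinct eigenvalues of $A_0$ with algebraic multiplicities $n_1,\dots,n_q$, where $q=|\sigma(A_0)|$. Choose pairwise disjoint closed contours $\Gamma_1,\dots,\Gamma_q$ in the complex plane, each $\Gamma_j$ enclosing $\lambda_j$ and no other eigenvalue of $A_0$. Set
\[
\delta=\min_j\,\mathrm{dist}(\lambda_j,\Gamma_j)>0,\qquad M=\max_j\,\max_{z\in\Gamma_j}\|(zI-A_0)^{-1}\|.
\]

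First I would choose $\varepsilon$ small enough that for every $A$ with $\|A-A_0\|<\varepsilon$, the resolvent $(zI-A)^{-1}$ exists for all $z\in\bigcup_j\Gamma_j$ and depends analytically on $A$ by a Neumann-series argument. Consequently each Riesz projection
\[
P_j(A)=\frac{1}{2\pi i}\oint_{\Gamma_j}(zI-A)^{-1}\,dz
\]
is well defined, has rank $n_j$ (by continuity of $\rank P_j(A)=\trace P_j(A)$, an integer-valued continuous function), and satisfies $\|P_j(A)-P_j(A_0)\|\le C_1\|A-A_0\|$ by a standard resolvent identity estimate. Thus the total number of eigenvalues of $A$ (counted with algebraic multiplicity) inside $\Gamma_j$ equals $n_j$ for every $j$.

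Next I would use the hypothesis $|\sigma(A)|=q$. Since the $\Gamma_j$'s partition $\sigma(A)$ and each contour encloses at least one distinct eigenvalue of $A$, and the total count of distinct eigenvalues is exactly $q$, each $\Gamma_j$ must contain \emph{exactly one} distinct eigenvalue $\mu_j$ of $A$, of algebraic multiplicity $n_j$. This ordering matches $\mu_j\leftrightarrow\lambda_j$. To extract the Lipschitz bound, observe that $AP_j(A)$, restricted to the range of $P_j(A)$, has $\mu_j$ as its only eigenvalue with multiplicity $n_j$, so
\[
\mu_j=\frac{1}{n_j}\trace\bigl(AP_j(A)\bigr),\qquad \lambda_j=\frac{1}{n_j}\trace\bigl(A_0 P_j(A_0)\bigr).
\]
Subtracting and writing $AP_j(A)-A_0P_j(A_0)=(A-A_0)P_j(A)+A_0(P_j(A)-P_j(A_0))$ yields
\[
|\mu_j-\lambda_j|\le\frac{1}{n_j}\bigl(\|P_j(A)\|+\|A_0\|\cdot C_1\bigr)\cdot n\cdot\|A-A_0\|\le K\|A-A_0\|,
\]
using $|\trace X|\le n\|X\|$, with $K$ depending only on $A_0$.

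The only delicate point is the step that forces each contour to contain exactly one \emph{distinct} eigenvalue of $A$: this uses the preservation of $|\sigma(A)|$ together with the multiplicity count from the rank of $P_j(A)$, and is precisely where the hypothesis of the proposition is consumed. Everything else is a routine resolvent-calculus estimate, so this is the only conceptual obstacle; once distinctness is established inside each contour, the trace formula for $\mu_j$ delivers Lipschitz dependence immediately.
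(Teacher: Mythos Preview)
The paper does not supply its own proof of this proposition; it simply records the statement and cites \cite{BOP}. Your argument via Riesz spectral projections is correct and is one of the standard routes to this Lipschitz bound. The crucial step---using $|\sigma(A)|=|\sigma(A_0)|=q$ together with the rank count $\sum_j\rank P_j(A)=n$ to force exactly one distinct eigenvalue of $A$ inside each $\Gamma_j$, after which the trace identity $\mu_j=n_j^{-1}\trace(AP_j(A))$ becomes available---is handled correctly and is precisely where the hypothesis enters. One small point you assert but do not spell out: the claim that the $\Gamma_j$ partition $\sigma(A)$ (i.e.\ no eigenvalue of $A$ escapes all contours) follows from $\sum_j\rank P_j(A)=\sum_j n_j=n$, which you have already established; it would be worth making that inference explicit. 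Otherwise the argument is complete.
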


 For the general case of the eigenvalues stability we have the following result (see~\cite[Appendix K]{Os73}). 
\begin{proposition}\label{eigstab}
	Let $A_0$ be an $n\times n$ matrix and $\{\lambda_1,\dots,\lambda_n\}$ be its eigenvalues. Then, there is an ordering of $\lambda_j$'s that for every $A$ with $\|A-A_0\|<\varepsilon$ for sufficiently small $\varepsilon$ depending on $A_0$ there is an ordering of its eigenvalues $\mu_j$'s and a positive constant $K=K(A_0)$ such that
	\begin{equation}\label{holder}
	|\mu_j-\lambda_j|\leq K\|A-A_0\|^{1/n}.
	\end{equation}\end{proposition}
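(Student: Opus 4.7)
The plan is to reduce the eigenvalue perturbation bound to a statement about roots of the characteristic polynomial. Write $p_0(z)=\det(zI-A_0)=z^n+\sum_{k=0}^{n-1}a_k z^k$ and $p_A(z)=\det(zI-A)=z^n+\sum_{k=0}^{n-1}b_k z^k$. Each coefficient of the characteristic polynomial is a fixed polynomial function of the matrix entries, so on any bounded neighborhood of $A_0$ one has a Lipschitz bound
$$\max_{0\le k\le n-1} |a_k-b_k|\le C_1(A_0)\,\|A-A_0\|,$$
where $C_1(A_0)$ depends only on $A_0$ (and on the size of the neighborhood, which we absorb into $\varepsilon$).

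Next, I would invoke the classical H\"older estimate for zeros of polynomials due to Ostrowski: if two monic polynomials of degree $n$ have coefficients differing by at most $\delta$ and their roots lie in an a priori bounded region of the plane, then after a suitable matching of the zeros one has $|\mu_j-\lambda_j|\le C_2\,\delta^{1/n}$ for every $j$. Since the eigenvalues of $A$ are trapped in a compact set (for instance by $\|A\|\le \|A_0\|+1$), the constant $C_2$ depends only on $A_0$. Composing this with the coefficient bound yields the claimed inequality with $K=C_2\,C_1(A_0)^{1/n}$, and the promised ordering of the $\mu_j$ is exactly the pairing delivered by the root-matching step.

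The main obstacle is the polynomial root estimate itself, which is also the source of the H\"older exponent $1/n$; the exponent is sharp, as $p_0(z)=z^n$ versus $p_A(z)=z^n-\delta$ already produces roots at distance $\delta^{1/n}$. A direct proof goes through Rouch\'e's theorem: around each distinct root $\lambda$ of $p_0$ of algebraic multiplicity $m\le n$, choose a disc of radius proportional to $\delta^{1/m}$ on which the dominant-term estimate $|p_A-p_0|<|p_0|$ holds, conclude that $p_A$ has exactly $m$ zeros inside, and assign them arbitrarily to the $m$ copies of $\lambda$. Taking the worst-case exponent $1/n$ over all eigenvalues yields the uniform H\"older bound stated in the proposition.
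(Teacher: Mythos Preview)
The paper does not actually prove this proposition; it is quoted as a classical result from Ostrowski \cite[Appendix~K]{Os73}. Your argument is correct and is in fact the standard route to that result: pass to the characteristic polynomial, use that its coefficients are locally Lipschitz in the matrix entries, and then apply the Ostrowski root-perturbation bound, which you also correctly justify via Rouch\'e. So there is nothing to compare against beyond the citation itself, and your sketch supplies precisely the argument that the citation points to.
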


This type of bounds is called H\"older because of the power $1/n$ for the matrix norm.

 The following example shows that the power $1/n$ in~\eqref{holder} cannot be relaxed and, in general, we can hope only for a H\"older type bound.

 \begin{example}\label{ex1}
 	Consider the following matrices $A_0,A\in \mathbb{C}^{2\times 2}$.
 $$A_0=\begin{bmatrix}
 0&0\\
 1&0
 \end{bmatrix}\text{ and }\ 
 A=\begin{bmatrix}
 0&\epsilon\\
 1&0
 \end{bmatrix}.$$
 \end{example}
 Note that $\|A-A_0\|=\epsilon$. Moreover, $\sigma(A_0)=\{0\}$ and $\sigma(A)=\{\pm\sqrt{\epsilon}\}$. It is easy to see that in this case we have
 $$|0\mp\sqrt{\epsilon}|=\epsilon^{1/2}=\|A-A_0\|^{1/2}.$$
 This example can be easily modified for $n\times n$-matrices.

\subsection{Gap and Semi-gap}

We are going to discuss stability of vectors related to the invariant subspaces and hence need some topological properties of the set of subspaces in $\mathbb{C}^n$. 
%\begin{definition}

Let $\mathcal{M},\mathcal{N}$ be subspaces of $\mathbb{C}^n$.
	A matrix $P_\mathcal{M}$ is called an\textit{ orthogonal projector} onto a subspace $\mathcal{M}\subset \mathbb{C}^n$ if 
	\begin{itemize}
		\item it is surjective:  $\mathrm{Im}P_\mathcal{M}=\mathcal{M}$;
		\item applying it twice yields the same result: $P_\mathcal{M}^2=P_\mathcal{M}$;
		\item it is Hermitian: $P_\mathcal{M}^*=P_\mathcal{M}$.
	\end{itemize}
%\end{definition}

The following concept is the key definition.

 The \textbf{gap} $\theta(\mathcal{M},\mathcal{N})$ between $\mathcal{M}$ and $\mathcal{N}$ is defined as follows
	$$\theta(\mathcal{M},\mathcal{N})=\|P_\mathcal{M}-P_\mathcal{N}\|$$
	or, equivalently,
	$$\theta(\mathcal{M},\mathcal{N})=\max\left\{\underset{\begin{smallmatrix}
		x\in\mathcal{M}\\ \|x\|=1
		\end{smallmatrix}}{\sup}\underset{y\in\mathcal{N}}{\inf}\|x-y\|,\underset{\begin{smallmatrix}
		y\in\mathcal{N}\\ \|y\|=1
		\end{smallmatrix}}{\sup}\underset{x\in\mathcal{M}}{\inf}\|x-y\|\right\}.$$
%\end{definition}
Note that $\theta(\mathcal{M},\mathcal{N})$ is a metric on the set of all subspaces in $\mathbb{C}^n$. Moreover, $\theta(\mathcal{M},\mathcal{N})\leq 1$.
The Hausdorff distance between sets $\text{Inv } A$ and  $\text{Inv }  B$ of
all invariant subspaces matrices $A$ and $B$ can be defined as follows
$$\text{dist }(\text{Inv}\,A,\text{Inv}\,B)=\max\{\underset{\mathcal{M}\in\text{Inv }A}{\sup}\theta(\mathcal{M},\text{Inv }B),\underset{\mathcal{N}\in\text{Inv }B}{\sup}\theta(\mathcal{N},\text{Inv }A)\}.
$$ This distance is a metric as well.

We are going to use the following property of gaps between subspaces. It can be found in \cite{GLR86}.
\begin{proposition}\label{orthogap}
	For subspaces  $\mathcal{M},\mathcal{N}\subset\mathbb{C}^n$, we have
	\begin{equation}\label{gap}
	\theta(\mathcal{M},\mathcal{N})=\theta(\mathcal{N}^\perp,\mathcal{M}^\perp).
	\end{equation}
\end{proposition}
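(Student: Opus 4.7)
The plan is to use the projector-norm definition $\theta(\mathcal{M},\mathcal{N})=\|P_\mathcal{M}-P_\mathcal{N}\|$ together with the elementary identity $P_{\mathcal{M}^\perp}=I-P_\mathcal{M}$, and reduce the whole statement to a one-line computation.

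First I would record the auxiliary fact that for any subspace $\mathcal{M}\subset\mathbb{C}^n$, the matrix $I-P_\mathcal{M}$ is exactly the orthogonal projector onto $\mathcal{M}^\perp$. This needs a short verification against the three defining properties listed in the paper: Hermiticity follows from $(I-P_\mathcal{M})^*=I-P_\mathcal{M}^*=I-P_\mathcal{M}$; idempotence from $(I-P_\mathcal{M})^2=I-2P_\mathcal{M}+P_\mathcal{M}^2=I-P_\mathcal{M}$; and the image is $\mathcal{M}^\perp$ because $\mathbb{C}^n=\mathcal{M}\oplus\mathcal{M}^\perp$ is an orthogonal direct sum, so $x-P_\mathcal{M}x\in\mathcal{M}^\perp$ for every $x$, while $(I-P_\mathcal{M})y=y$ for every $y\in\mathcal{M}^\perp$.

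Once this is in place, I would plug the identity directly into the definition of the gap applied to the orthogonal complements:
\begin{equation*}
\theta(\mathcal{M}^\perp,\mathcal{N}^\perp)=\|P_{\mathcal{M}^\perp}-P_{\mathcal{N}^\perp}\|=\|(I-P_\mathcal{M})-(I-P_\mathcal{N})\|=\|P_\mathcal{N}-P_\mathcal{M}\|=\|P_\mathcal{M}-P_\mathcal{N}\|=\theta(\mathcal{M},\mathcal{N}).
\end{equation*}
Finally, since $\theta$ is a metric it is symmetric in its two arguments, so $\theta(\mathcal{M}^\perp,\mathcal{N}^\perp)=\theta(\mathcal{N}^\perp,\mathcal{M}^\perp)$, which yields the claimed equality \eqref{gap}.

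There is no real obstacle to overcome here: everything collapses to the fact that taking orthogonal complements corresponds to the affine involution $P\mapsto I-P$ on orthogonal projectors, an operation that is an isometry for the operator norm on differences. The only part that one might want to be careful with is to make sure that the sup--inf formulation of $\theta$ is not used in the proof (it would be clumsy here), and to rely on the projector-norm formulation throughout.
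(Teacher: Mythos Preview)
Your argument is correct: the identity $P_{\mathcal{M}^\perp}=I-P_\mathcal{M}$ together with the projector-norm definition of the gap reduces the claim to a one-line computation, and invoking the symmetry of the metric handles the swap of arguments. The paper itself does not prove this proposition at all; it simply cites \cite{GLR86}, so there is no in-paper proof to compare against.
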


The symmetry with respect to subspaces of the gap is actually
a disadvantage.

\begin{proposition}
	Let $\mathcal{M},\mathcal{N}$ be subspaces of $\mathbb{C}^n$.
	\begin{itemize}
		\item [(i)]If $\dim(\mathcal{M})=\dim(\mathcal{N})$ then for any $x\in\mathcal{M}$ there exists a $y\in\mathcal{N}$ such that $\|x-y\|\leq \theta(\mathcal{M},\mathcal{N})$.
		\item[(ii)] If $\dim(\mathcal{M})\neq\dim(\mathcal{N})$ then $ \theta(\mathcal{M},\mathcal{N})=1$.
	\end{itemize}
\end{proposition}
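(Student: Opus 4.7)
For part (i), the natural candidate for $y$ is the orthogonal projection $y = P_\mathcal{N} x$. Because $x\in\mathcal{M}$ we have $P_\mathcal{M} x = x$, so
\[
x - y \;=\; P_\mathcal{M} x - P_\mathcal{N} x \;=\; (P_\mathcal{M} - P_\mathcal{N})x.
\]
Taking norms and invoking the operator-norm definition $\theta(\mathcal{M},\mathcal{N})=\|P_\mathcal{M}-P_\mathcal{N}\|$ gives $\|x-y\|\le \theta(\mathcal{M},\mathcal{N})\|x\|$, which yields the claimed bound under the implicit normalization $\|x\|=1$ (equivalently the scale-invariant estimate $\|x-y\|\le\theta(\mathcal{M},\mathcal{N})\|x\|$ for arbitrary $x$). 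Note the equal-dimension hypothesis is not actually used in this step; it is included because part (ii) would otherwise make the estimate trivial.

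For part (ii), assume without loss of generality that $\dim\mathcal{M}>\dim\mathcal{N}$. The restriction $P_\mathcal{N}\big|_{\mathcal{M}}\colon \mathcal{M}\to\mathcal{N}$ is a linear map between spaces whose dimensions strictly decrease, so it has nontrivial kernel. Picking a unit vector $x\in\mathcal{M}$ with $P_\mathcal{N} x=0$ gives $x\perp\mathcal{N}$, hence $\inf_{y\in\mathcal{N}}\|x-y\|=\|x\|=1$. Using the second (max-min) formulation of the gap, this already forces $\theta(\mathcal{M},\mathcal{N})\ge 1$.

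The complementary upper bound $\theta(\mathcal{M},\mathcal{N})\le 1$ is, to my mind, the only non-routine point. I would establish it by the decomposition
\[
P_\mathcal{M}-P_\mathcal{N} \;=\; P_\mathcal{M}(I-P_\mathcal{N}) \;-\; (I-P_\mathcal{M})P_\mathcal{N},
\]
whose two summands have orthogonal ranges $\mathcal{M}$ and $\mathcal{M}^\perp$. Applied to any vector $x$, the Pythagorean identity then gives
\[
\|(P_\mathcal{M}-P_\mathcal{N})x\|^2 \;=\; \|P_\mathcal{M}(I-P_\mathcal{N})x\|^2 + \|(I-P_\mathcal{M})P_\mathcal{N} x\|^2 \;\le\; \|(I-P_\mathcal{N})x\|^2 + \|P_\mathcal{N} x\|^2 \;=\; \|x\|^2,
\]
so $\|P_\mathcal{M}-P_\mathcal{N}\|\le 1$, i.e.\ $\theta(\mathcal{M},\mathcal{N})\le 1$. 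Combining the two bounds yields $\theta(\mathcal{M},\mathcal{N})=1$. The main conceptual obstacle is thus isolating the orthogonal-ranges decomposition of $P_\mathcal{M}-P_\mathcal{N}$; once that is in hand, both parts of the proposition follow with only the two characterizations of $\theta$ already recorded in the text.
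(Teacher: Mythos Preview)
Your argument is correct. For part (i) the choice $y=P_{\mathcal N}x$ and the identity $x-y=(P_{\mathcal M}-P_{\mathcal N})x$ give the bound immediately; your remark that the statement tacitly assumes $\|x\|=1$ (otherwise the right-hand side should carry a factor $\|x\|$) is well taken, and you are right that the equal-dimension hypothesis plays no role in this direction. For part (ii) the rank--nullity argument producing a unit $x\in\mathcal M\cap\mathcal N^{\perp}$ is the standard one, and your decomposition $P_{\mathcal M}-P_{\mathcal N}=P_{\mathcal M}(I-P_{\mathcal N})-(I-P_{\mathcal M})P_{\mathcal N}$ with orthogonal ranges is a clean way to get the universal bound $\theta\le 1$.

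There is nothing to compare against: the paper states this proposition without proof, treating it as a known fact from the literature (the surrounding results on gap and semigap are attributed to \cite{GLR86,K66}). The same content reappears in Lemma~\ref{semigap}, items (iv) and (v), for the one-sided gap $\theta_0$, again without proof. So your write-up actually supplies what the paper only cites; it is self-contained and uses nothing beyond the two characterizations of $\theta$ already recorded in the text.
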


The above result shows us that the gap is often not useful to consider when  $\dim(\mathcal{M})\neq\dim(\mathcal{N})$.
In our theorem, we wish to find bounds on the kernels of the matrices, however, the dimension of the kernels are, in general, not equal. 

The gap provides many useful results in providing a variety of bounds but the
usefulness is limited to when the dimensions are equal. The concept of a semi-gap can be helpful when the dimensions are not equal. This advantage is highly useful when considering matrix
perturbations.

%\begin{definition} 
Let $\mathcal{M},\mathcal{N}$ be subspaces of $\mathbb{C}^n$. The quantity
	$$\theta_0(\mathcal{M},\mathcal{N})=\underset{\begin{smallmatrix}
		x\in\mathcal{M}\\ \|x\|=1
		\end{smallmatrix}}{\sup}\underset{y\in\mathcal{N}}{\inf}\|x-y\|$$
	is called the \textit{semigap} (or one-sided gap) from $\mathcal{M}$ to $\mathcal{N}$.
%\end{definition}

We notice some immediate properties of the semi-gap.

\begin{lemma}\label{semigap}
	Let $\mathcal{M},\mathcal{N}\subset \mathbb{C}^n$ be two subspaces. Then the following statements hold.
	\begin{itemize}
		\item [(i)] $\theta(\mathcal{M},\mathcal{N})=\max\{\theta_0(\mathcal{M},\mathcal{N}),\theta_0(\mathcal{N},\mathcal{M})\}$.
		\item[(ii)] $\theta_0(\mathcal{M},\mathcal{N})=\underset{\begin{smallmatrix}
			x\in\mathcal{M}\\ \|x\|=1
			\end{smallmatrix}}{\sup}{}\|x-P_\mathcal{N}x\|$.
		\item [(iii)]If $\mathcal{N}_1\subset\mathcal{N}_2$, then $\theta_0(\mathcal{M},\mathcal{N}_2)\leq \theta_0(\mathcal{M},\mathcal{N}_1)$,  $\theta_0(\mathcal{N}_1,\mathcal{M})\leq \theta_0(\mathcal{N}_2,\mathcal{M})$.
		\item[(iv)] $\theta_0(\mathcal{M},\mathcal{N})\leq 1$.
		\item[(v)] If $\dim \mathcal{M}>\dim\mathcal{N}$, then $\theta_0(\mathcal{M},\mathcal{N})=1$.
		\item[(vi)] $\theta_0(\mathcal{M},\mathcal{N})<1$ if and only if $\mathcal{M}\cap\mathcal{N}^\perp=\emptyset$.
	\end{itemize} 
\end{lemma}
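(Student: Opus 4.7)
The plan is to handle the six items in a convenient order, taking advantage of the fact that (ii) gives a projection-based reformulation that makes everything else essentially definitional. I would first establish (ii), then immediately deduce (i), (iii), (iv) from the definitions, and finish with (v) and (vi), both of which rest on the Pythagorean identity $\|x\|^2 = \|P_\mathcal{N} x\|^2 + \|x - P_\mathcal{N} x\|^2$ together with a dimension count.

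For (ii), I would invoke the classical best-approximation property: for a closed subspace $\mathcal{N}\subset\mathbb{C}^n$ and any $x$, the unique minimizer of $\|x-y\|$ over $y\in\mathcal{N}$ is $y=P_\mathcal{N} x$, so $\inf_{y\in\mathcal{N}}\|x-y\|=\|x-P_\mathcal{N} x\|$. Taking the sup over unit vectors $x\in\mathcal{M}$ gives the stated formula. Claim (i) is then immediate from the second, max-of-two-sups characterization of $\theta(\mathcal{M},\mathcal{N})$ given right after the gap is defined, as each of the two inner suprema is precisely a semigap. For (iii), the inclusion $\mathcal{N}_1\subset\mathcal{N}_2$ can only decrease $\inf_{y\in\mathcal{N}_k}\|x-y\|$, which proves the first inequality; the second follows because the sup in $\theta_0(\mathcal{N}_1,\mathcal{M})$ is taken over a smaller sphere than in $\theta_0(\mathcal{N}_2,\mathcal{M})$. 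For (iv), use $0\in\mathcal{N}$ to bound $\inf_{y\in\mathcal{N}}\|x-y\|\le\|x\|=1$.

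For (v), a dimension count gives
\[
\dim(\mathcal{M}\cap\mathcal{N}^\perp)\ \ge\ \dim\mathcal{M}+(n-\dim\mathcal{N})-n\ \ge\ 1,
\]
so there is a unit $x\in\mathcal{M}$ with $P_\mathcal{N} x=0$; by (ii) this forces $\theta_0(\mathcal{M},\mathcal{N})\ge 1$, and with (iv) we get equality. For (vi), I would first note that the sup in (ii) is actually attained, since the unit sphere of $\mathcal{M}$ is compact in finite dimension and $x\mapsto\|x-P_\mathcal{N} x\|$ is continuous. By the Pythagorean identity, a unit vector $x\in\mathcal{M}$ satisfies $\|x-P_\mathcal{N} x\|=1$ if and only if $P_\mathcal{N} x=0$, i.e. $x\in\mathcal{M}\cap\mathcal{N}^\perp$. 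Hence $\theta_0(\mathcal{M},\mathcal{N})=1$ exactly when $\mathcal{M}\cap\mathcal{N}^\perp$ contains a nonzero vector, which is the contrapositive of the stated biconditional (interpreting the right-hand side in (vi) as the trivial intersection).

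There is no real obstacle here; the content is elementary once (ii) has been observed. The one subtle point is in (vi), where one must use attainment of the supremum in finite dimension, so that the strict inequality $\theta_0<1$ genuinely rules out the existence of a unit vector in $\mathcal{M}\cap\mathcal{N}^\perp$ rather than merely its approximate existence.
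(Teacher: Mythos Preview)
Your argument is correct and self-contained. The paper does not actually prove this lemma; immediately after stating it, the authors write ``These facts are well-known and can be found e.g.\ in \cite{GLR86,K66}.'' So there is no proof in the paper to compare against, and your proposal supplies precisely what the paper omits.

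A few minor remarks. Your order (ii) $\Rightarrow$ (i), (iii), (iv) $\Rightarrow$ (v), (vi) is efficient and standard; the compactness observation in (vi) is indeed the one point that requires finite-dimensionality (or at least closedness of the unit sphere), and you flag it appropriately. Your parenthetical about interpreting $\mathcal{M}\cap\mathcal{N}^\perp=\emptyset$ as $\mathcal{M}\cap\mathcal{N}^\perp=\{0\}$ is also well placed: as written the right-hand side of (vi) is literally impossible since $0$ lies in every subspace, so the intended meaning must be the trivial intersection, and your proof establishes exactly that version.
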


These facts are well-known and can be found e.g. in \cite{GLR86,K66}.

 To this end we will
need some new results on gap and semigap. These results will be derived next.

\begin{proposition}\label{kernel}
	Let $A_0$ be fixed. Then, there exist $\epsilon, K>0$ such that for all $A$ with $\|A-A_0\|<\epsilon$, we have
	\begin{equation}\label{ker}
	\theta_0(\ker (A),\ker (A_0))\leq K\|A-A_0\|.
	\end{equation}
\end{proposition}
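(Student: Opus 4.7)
My plan is to bound the semigap directly by decomposing a unit vector in $\ker(A)$ into its $\ker(A_0)$-component and its orthogonal complement, then controlling the orthogonal piece by the fact that $A_0$ acts injectively (with a quantitative lower bound) on $(\ker A_0)^\perp$.

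First I would verify that the semigap is not forced to $1$ by a dimension mismatch. By lower semicontinuity of the rank, $\rank(A) \geq \rank(A_0)$ for all $A$ in a sufficiently small neighborhood of $A_0$, equivalently $\dim \ker(A) \leq \dim \ker(A_0)$. Thus Lemma~\ref{semigap}(v) does not kick in. (If $A_0$ is invertible, then so is $A$ nearby, $\ker(A)=\{0\}$, and the semigap is $0$ trivially, so I may assume $\ker(A_0)\neq \{0\}$.)

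Next, fix a unit vector $x \in \ker(A)$ and decompose $x = x_1 + x_2$ with $x_1 = P_{\ker(A_0)} x \in \ker(A_0)$ and $x_2 \in (\ker A_0)^\perp$. By Lemma~\ref{semigap}(ii), the semigap equals $\sup_{\|x\|=1,\,x\in \ker A}\|x - P_{\ker A_0} x\| = \sup \|x_2\|$, so it suffices to bound $\|x_2\|$ uniformly. Since $Ax = 0$ and $A_0 x_1 = 0$,
\begin{equation*}
A_0 x_2 \;=\; A_0 x \;=\; (A_0 - A)x,
\end{equation*}
which gives the estimate $\|A_0 x_2\| \leq \|A - A_0\|\cdot \|x\| = \|A-A_0\|$.

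Now I would invoke the key lower bound: the restriction of $A_0$ to $(\ker A_0)^\perp$ is injective, so there is a constant $\sigma > 0$ (the smallest nonzero singular value of $A_0$) with $\|A_0 v\| \geq \sigma \|v\|$ for every $v \in (\ker A_0)^\perp$. Applied to $v = x_2$, this yields $\|x_2\| \leq \sigma^{-1} \|A - A_0\|$, and taking the supremum over $x$ gives $\theta_0(\ker A,\ker A_0) \leq K\|A-A_0\|$ with $K = 1/\sigma$, which is a constant depending only on $A_0$.

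The only nontrivial step is the opening one: making sure that the dimensions cooperate so that the semigap is genuinely informative. Once that is secured by the lower semicontinuity of rank, the rest is a direct computation using orthogonal decomposition and the bounded-below property of $A_0$ on $(\ker A_0)^\perp$; no further topology on subspaces is needed, and in fact the proof gives a clean Lipschitz bound rather than a merely H\"older one.
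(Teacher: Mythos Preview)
Your proof is correct. The paper, despite announcing that ``these results will be derived next,'' in fact states Proposition~\ref{kernel} without proof and moves directly into Section~3, so there is no argument in the paper to compare against. Your approach---orthogonally decomposing a unit vector $x\in\ker A$ as $x_1+x_2$ with $x_1\in\ker A_0$, $x_2\in(\ker A_0)^\perp$, then using $A_0x_2=(A_0-A)x$ together with the lower bound $\|A_0v\|\ge\sigma\|v\|$ on $(\ker A_0)^\perp$ coming from the smallest nonzero singular value of $A_0$---is the natural one and gives the clean constant $K=1/\sigma$ depending only on $A_0$.

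One small remark: the dimension discussion at the start (lower semicontinuity of rank) is a useful sanity check but is not logically needed for the argument. Your estimate $\|x_2\|\le\sigma^{-1}\|A-A_0\|$ holds for every unit $x\in\ker A$ without any appeal to $\dim\ker A\le\dim\ker A_0$; indeed, once $\epsilon$ is chosen so that $K\epsilon<1$, the dimension inequality follows \emph{a posteriori} from your bound via Lemma~\ref{semigap}(v). So the ``only nontrivial step'' you flag is in fact redundant, and the proof is even simpler than you suggest.
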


\section{The Same Jordan Structure and Forward Stability
}

Before proving Theorem~\ref*{lipschitz} we need a couple of technical lemmas. 

The next result describes the recursion we will use. In particular, we want to figure out what happens to the GK numbers during each step of recursion.
Here is the idea behind it:
\begin{center}\begin{tikzpicture}[line cap=round,line join=round,x=1cm,y=1cm]
\clip(-12.12,2.5) rectangle (0,5);
\draw (-12.105249153459209,4.8848968485047015) node[anchor=north west] {$m_1(A,\lambda_t)\ge m_2(A,\lambda_t)\ge m_3(A,\lambda_t)\ge \ldots \ge m_{l-1}(A,\lambda_t)\ge m_l(A,\lambda_t)$};
\draw [line width=1pt] (-12,4.2)-- (-12,4);
\draw [line width=1pt] (-12,4)-- (-2.8,4);
\draw [line width=1pt] (-2.8,4)-- (-2.8,4.2);
%%%%%%
\draw (-12.0921026287238,3.35) node[anchor=north west] {$m_j(A,\lambda_t)<m_l(A,\lambda_t)\text{ for }j>l$};
\draw [shift={(-11,2)},line width=1pt]  plot[domain=1.5882848543070331:2.1641892860463714,variable=\t]({1*1.5573359986717992*cos(\t r)+0*1.5573359986717992*sin(\t r)+0.1},{0*1.5573359986717992*cos(\t r)+1*1.5573359986717992*sin(\t r)-0.057});
\draw [shift={(-10.369907888385308,5.345025214243993)},line width=1pt]  plot[domain=4.073352330911641:4.695441450578285,variable=\t]({1*1.6752367373558714*cos(\t r)+0*1.6752367373558714*sin(\t r)},{0*1.6752367373558714*cos(\t r)+1*1.6752367373558714*sin(\t r)});
\draw [line width=1pt] (-10.92,3.5)-- (-10.396200937856124,3.5);
\draw [->,line width=1pt] (-10.396200937856124,3.67) -- (-10,3.67);
\draw [->,line width=1pt] (-10.422645098966939,3.5) -- (-10,3.5);
\draw (-9.988658671058467,3.9) node[anchor=north west] {$\text{The corresponding Jordan chains stay the same}.$};
\end{tikzpicture}
\end{center}

So what happens when $m_j(A,\lambda_t)=m_l(A,\lambda_t)$ for some $j$'s greater than $l$? Let $j^*$ be the maximal such index.

\definecolor{cqcqcq}{rgb}{0.7529411764705882,0.7529411764705882,0.7529411764705882}
\begin{tikzpicture}[line cap=round,line join=round,>=triangle 45,x=1cm,y=1cm]
\clip(-14.21201668692858,-1.3) rectangle (1.0090665963473844,1.4648284385882258);
\fill[line width=1pt,color=cqcqcq,fill=cqcqcq,fill opacity=0.05] (-13,-0.043) -- (-13,-0.65) -- (-11.43,-0.65) -- (-11.43,-0.043) -- cycle;
\draw (-13.090581182378063,0) node[anchor=north west] {$m_l(A,\lambda_t)=m_{l+1}(A,\lambda_t)=\ldots=m_{j^*}(A,\lambda_t)>m_l(A,\lambda_t)-1$};
\draw [line width=1pt,color=cqcqcq] (-13,-0.04338353162684747)-- (-13.005786097834683,-0.6589926324169902);
\draw [line width=1pt,color=cqcqcq] (-13.005786097834683,-0.6589926324169902)-- (-11.40848334248257,-0.6392728453138777);
\draw [line width=1pt,color=cqcqcq] (-11.40848334248257,-0.6392728453138777)-- (-11.40848334248257,-0.04570725351019176);
\draw [line width=1pt,color=cqcqcq] (-11.40848334248257,-0.04570725351019176)-- (-13,-0.04338353162684747);
\draw [shift={(-10,-3)},line width=1pt]  plot[domain=1.567619094245166:2.1657704681284176,variable=\t]({1*3.5695681211418098*cos(\t r)+0*3.5695681211418098*sin(\t r)},{0*3.5695681211418098*cos(\t r)+1*3.5695681211418098*sin(\t r)});
\draw [line width=1pt] (-9.98865867105847,0.5695501041069182)-- (-5.238161957918663,0.5754660402378526);
\draw [<-,shift={(-5.780456103254257,-3.2856682745515724)},line width=1pt]  plot[domain=1.007480065302928:1.4312596139915073,variable=\t]({1*3.8996944747832263*cos(\t r)+0*3.8996944747832263*sin(\t r)},{0*3.8996944747832263*cos(\t r)+1*3.8996944747832263*sin(\t r)});
\draw [rotate around={-0.4:(-3,-0.34643400683265624)},line width=1pt,color=cqcqcq,fill=cqcqcq,fill opacity=0.05] (-3.95,-0.37) ellipse (1.35cm and 0.35cm);
\draw (-11.875842296826331,1.2) node[anchor=north west] {$\text{Recursion decreases this chain by one vector.}$};
\end{tikzpicture}
{%\color{blue}

Now let us formalize it.

\begin{lemma}\label{reducingJordan}
Consider matrix $B$ with the eigenvalues $\{\lambda_j\}$'s, having the GK numbers $\{m_j(B,\lambda_i)\}$ and $e_1$as its eigenvector corresponding to the Jordan chain for $\lambda_t$ and $m_l(B,\lambda_t)$, i.e. 
	\begin{equation*}
B=\left[\begin{array}{c|c}
\lambda_t&\begin{matrix}
\star&\cdots&\star
\end{matrix}\\ \hline\\
\begin{matrix}
0\\ \vdots \\0
\end{matrix}&\text{\huge $C$}
\end{array}\right].
\end{equation*} 
Then 
\begin{itemize}
    \item $m_j(C,\lambda_i)=m_j(B,\lambda_i)$ for all $i\ne t$ or $i=t$ and $j>l+1$;
    \item $m_l(C,\lambda_t)=m_l(B,\lambda_t)-1$, $m_{l+1}(C,\lambda_t)=m_{l+1}(B,\lambda_t)$ if $m_l(A,\lambda_t)>m_{l+1}(B,\lambda_t)$;
    \item  $m_{j^*}(C,\lambda_t)=m_l(B,\lambda_t)-1$, $m_{j}(C,\lambda_t)=m_{j+1}(B,\lambda_t)$ for $j=l,\dots,j^*-1$ if $m_l(B,\lambda_t)=m_{l+1}(B,\lambda_t)=\ldots=m_{j^*}(B,\lambda_t)$ and $j^*$ is the maximal such index;
    \item $m_j(C,\lambda_t)=m_j(B,\lambda_t)$ if $m_j(B,\lambda_1)<m_{l}(B,\lambda_t)$.
\end{itemize}
\end{lemma}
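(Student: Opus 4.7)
The plan is to read off the Jordan structure of $C$ from that of $B$ by projecting a suitably chosen Jordan basis of $B$ along the first coordinate. Since $e_1$ is an eigenvector of $B$ for $\lambda_t$, the line $L=\spn\{e_1\}$ is $B$-invariant, and the coordinate projection $\pi:\mathbb{C}^n\to\mathbb{C}^{n-1}$ that drops the first entry satisfies $\pi B=C\pi$ with $\ker\pi=L$. Thus $C$ is the map induced by $B$ on the quotient $\mathbb{C}^n/L$, and Jordan chains of $B$ descend to Jordan chains of $C$ of the same length whenever they are not decapitated by $\pi$.

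First I would fix a Jordan basis of $B$ in which the chain of length $m_l(B,\lambda_t)$ attached to $\lambda_t$ begins with $v_1=e_1$, writing it as $v_1,v_2,\ldots,v_{m_l(B,\lambda_t)}$, together with the remaining chains of $\lambda_t$ and the chains for all other eigenvalues. Applying $\pi$ to every basis vector and using the intertwining identity $(C-\mu I)\pi(x)=\pi((B-\mu I)x)$, every chain of $B$ yields a candidate Jordan chain of $C$ of the same length, with the sole exception that $\pi(v_1)=0$ truncates the distinguished chain to $\pi(v_2),\ldots,\pi(v_{m_l(B,\lambda_t)})$, a chain of length $m_l(B,\lambda_t)-1$.

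Next I would verify linear independence of the $n-1$ surviving images: any dependence lifts to a combination of the original Jordan basis lying in $\ker\pi=\spn\{v_1\}$, and the independence of the Jordan basis forces every coefficient on a vector other than $v_1$ to vanish. This produces a Jordan basis for $C$, so the multiset of Jordan block sizes of $C$ is obtained from that of $B$ by replacing the single entry $m_l(B,\lambda_t)$ with $m_l(B,\lambda_t)-1$ and leaving every other block and every other eigenvalue intact.

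The final step, which simultaneously accounts for all four bullets, is to re-sort this multiset in non-increasing order. If $m_l(B,\lambda_t)>m_{l+1}(B,\lambda_t)$ the decrement does not disturb any neighbor, giving the second bullet; on a plateau $m_l(B,\lambda_t)=\cdots=m_{j^*}(B,\lambda_t)$ the decremented value slides down to position $j^*$, producing the cyclic shift of the third bullet; and the entries either attached to other eigenvalues or strictly smaller than $m_l(B,\lambda_t)$ remain unchanged, accounting for the first and fourth bullets. The main obstacle I foresee is the bookkeeping of this re-sort, but it is controlled entirely by the linear-independence check, whose decisive ingredient is the hypothesis $v_1=e_1$; no genuine perturbation estimate is needed.
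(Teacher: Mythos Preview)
Your argument is correct. The intertwining relation $\pi B=C\pi$ with $\ker\pi=\spn\{e_1\}$ is exactly the statement that $C$ is the quotient of $B$ by the invariant line $\spn\{e_1\}$; pushing a Jordan basis of $B$ through $\pi$ and checking linear independence (which, as you note, reduces immediately to the independence of the original basis because $\ker\pi=\spn\{v_1\}$) gives a Jordan basis of $C$ with precisely one chain shortened by one. The final re-sort is pure combinatorics and is handled correctly.

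The paper itself does not prove this lemma here; it cites the companion paper \cite{MNO} for the argument, so there is no in-paper proof to compare against. Your quotient/projection approach is the standard one and is almost certainly what \cite{MNO} does as well. The only point worth making explicit in a final write-up is that the hypothesis ``$e_1$ is the eigenvector of a Jordan chain of length $m_l(B,\lambda_t)$'' is precisely what guarantees the existence of a Jordan basis of $B$ containing that particular chain; you use this when you ``fix a Jordan basis of $B$ in which the chain \ldots\ begins with $v_1=e_1$'', and it is worth a sentence so the reader sees where the hypothesis enters.
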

The proof of this lemma c be found in \cite{MNO}.

The following fact allows us to use recursion in showing the Lipshitz stability of the Schur form under small perturbations preserving the Jordan structure.
Before proceeding let us warn the reader. We need to be cautious and not to fall in the pitfall in front of us. We start with close matrices with the same Jordan structure: $m_1(T_0,0)=m_1(B,0)=2$ and  $m_2(T_0,0)=m_2(B,0)=1$.
\begin{equation*}
T_0=\left[\begin{array}{c|c c}
0&1&0\\ \hline
0&0&0\\
0&0&0
\end{array}\right]
\quad\text{and}\quad
B=\left[\begin{array}{c|c c}
0&1&0\\ \hline
0&0&0\\
0&\varepsilon&0
\end{array}\right].
\end{equation*} 
We might think that after using recursion we might get different Jordan structures for submatrices of $T_0$ and $B$. However, it is not true since $e_1$ is the eigenvector of $T_0$ corresponding to the Jordan chain of length 2. We need to find another eigenvector of $B$ and not just $e_1$ that corresponds to the Jordan chain of length 2 as well. In this case we could take $e_1+\varepsilon e_3$ and build our recursion from there. So the "close" Jordan chains actually are:
$$e_2\to e_1\to0\quad\text{vs.}\quad e_2\to e_1+\varepsilon e_3\to0.$$
Then we construct a unitary matrix where the first column is $\frac{e_1+\varepsilon e_3}{\sqrt{1+\varepsilon^2}}$ that will be close to identity
$$V_1=\begin{bmatrix}
\frac{1}{\sqrt{1+\varepsilon^2}}&0&-\frac{\varepsilon}{\sqrt{1+\varepsilon^2}}\\
0&1&0\\
\frac{\varepsilon}{\sqrt{1+\varepsilon^2}}&0&\frac{1}{\sqrt{1+\varepsilon^2}}
\end{bmatrix}.$$
This gives us the following Schur factorizations.

\begin{equation*}
T_0=I\left[\begin{array}{c|c c}
0&1&0\\ \hline
0&0&0\\
0&0&0
\end{array}\right]I^*
\quad\text{and}\quad
B=V_1\left[\begin{array}{c|c c}
0&\sqrt{1+\varepsilon^2}&0\\ \hline
0&0&0\\
0&0&0
\end{array}\right]V_1^*.
\end{equation*} 
The general result holds as well.
\begin{lemma}\label{recursion}
	Let $T_0$ be an upper triangular matrix with the eigenvalues $\{\lambda_i\}$'s. Then,  for all $B\in\mathcal{J}(\Omega(T_0))$ with the eigenvalues  $\{\mu_j\}$'s  there
exist  a suitable order of $\{\lambda_i\}$'s and $\{\mu_j\}$'s and unitary matrix $V_1$  such that matrices $T_1$ and $B_1$ in
\begin{equation*}
T_0=\left[\begin{array}{c|c}
\lambda_1&\begin{matrix}
\star&\cdots&\star
\end{matrix}\\ \hline\\
\begin{matrix}
0\\ \vdots \\0
\end{matrix}&\text{\huge $T_1$}
\end{array}\right]
\quad\text{and}\quad
V_1^*BV_1=\left[\begin{array}{c|c}
\mu_1&\begin{matrix}
*&\cdots&*
\end{matrix}\\ \hline\\
\begin{matrix}
0\\ \vdots \\0
\end{matrix}&\text{\huge $B_1$}
\end{array}\right]
\end{equation*}
have the same Jordan structure, i.e.  $m_k(T_1,\lambda_l)=m_k(B_1,\mu_l)$. Moreover,
	\begin{equation}\label{IU}
\|I-V_1\|\leq K\|T_0-B\|.
\end{equation}
\end{lemma}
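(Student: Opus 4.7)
The plan is to construct $V_1$ so that its first column is an eigenvector of $B$ close to $e_1$ that starts a Jordan chain of the same length as $e_1$'s chain in $T_0$; completing to an orthonormal basis produces $V_1$, and applying Lemma~\ref{reducingJordan} to both $T_0$ and $V_1^{*}BV_1$ then removes the same-length chain from each, leaving the trailing blocks $T_1$ and $B_1$ with a common Jordan structure. As a preliminary, since $T_0$ and $B$ share the full Jordan structure they have the same distinct eigenvalues with multiplicities; by Proposition~\ref{dimeq} we order them so that $\lambda_1:=(T_0)_{11}$ is paired with an eigenvalue $\mu_1$ of $B$ satisfying $|\mu_1-\lambda_1|\le C\|T_0-B\|$. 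Let $k_0:=m_l(T_0,\lambda_1)$ be the length of the Jordan chain of $T_0$ through $e_1$, and for brevity write $N_0:=T_0-\lambda_1 I$ and $N:=B-\mu_1 I$, noting $\|N_0-N\|=O(\|T_0-B\|)$.

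The main step is to locate $v\in\ker N\cap\operatorname{Im}(N^{k_0-1})$ with $\|v-e_1\|\le C\|T_0-B\|$. Because $e_1\in\ker N_0\cap\operatorname{Im}(N_0^{k_0-1})$, write $e_1=N_0^{k_0-1}w_0$ for some $w_0\in\ker N_0^{k_0}$. The Jordan-structure hypothesis forces $\dim\ker N_0^{j}=\dim\ker N^{j}$ for every $j$; combining this with Proposition~\ref{kernel} applied to $N_0^{k_0}$ and $N^{k_0}$, the polynomial estimate $\|N_0^{k_0}-N^{k_0}\|=O(\|T_0-B\|)$, and the fact that the two semigaps between equal-dimension subspaces coincide, I obtain $w\in\ker N^{k_0}$ with $\|w-w_0\|\le C\|T_0-B\|$. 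Setting $v:=N^{k_0-1}w$ places $v$ in $\ker N\cap\operatorname{Im}(N^{k_0-1})$, and the two-term split
\[
\|v-e_1\|\le\|N^{k_0-1}\|\,\|w-w_0\|+\|N^{k_0-1}-N_0^{k_0-1}\|\,\|w_0\|\le C'\|T_0-B\|
\]
delivers the desired bound. Because $e_1\notin\operatorname{Im}(N_0^{k_0})$, continuity forces $v\notin\operatorname{Im}(N^{k_0})$ for sufficiently small $\|T_0-B\|$, so the Jordan chain of $v$ in $B$ has length exactly $k_0$.

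Normalize $v$ and complete via Gram--Schmidt to an orthonormal basis; let $V_1$ be the resulting unitary. Standard Gram--Schmidt perturbation estimates give $\|I-V_1\|\le K\|v-e_1\|\le K'\|T_0-B\|$, establishing (\ref{IU}). Since $Nv=0$ we have $Bv=\mu_1 v$, so the first column of $V_1^{*}BV_1$ is $(\mu_1,0,\dots,0)^{\top}$, producing the required block form, and $T_0$ is already in such form. Unitary conjugation preserves Jordan chains, so $e_1$ sits at the top of a chain of length $k_0$ in $V_1^{*}BV_1$, exactly as it does in $T_0$. Applying Lemma~\ref{reducingJordan} to each matrix at the eigenvalue $\lambda_1=\mu_1$ and chain index corresponding to $k_0$ performs the same modification of the common Jordan structure, hence $T_1$ and $B_1$ share it. The chief obstacle throughout is the Lipschitz control of $\|w-w_0\|$: while Proposition~\ref{kernel} supplies a Lipschitz semigap bound between $\ker N_0^{k_0}$ and $\ker N^{k_0}$, using it to approximate the specific preimage $w_0$ by an element of $\ker N^{k_0}$ depends crucially on the dimension equality supplied by $B\in\mathcal{J}(\Omega(T_0))$, without which the relevant semigap need not be small and the whole construction collapses.
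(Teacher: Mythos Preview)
Your argument is correct and follows the same overall architecture as the paper's proof: locate an eigenvector of $B$ near $e_1$ that heads a Jordan chain of the same length $k_0$, complete it to an orthonormal basis to obtain a unitary $V_1$ close to $I$, and then invoke Lemma~\ref{reducingJordan} on both block decompositions to conclude that $T_1$ and $B_1$ inherit a common Jordan structure.

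The difference is in how the close eigenvector is produced. The paper imports it as a black box from \cite[Proposition~1.5]{BOP}, which supplies an entire Jordan chain $\{g_j\}$ for $B$ Lipschitz-close to the chain $\{f_j\}$ of $T_0$ with $f_1=e_1$, and then simply takes $g_1$. You instead stay within the paper's own toolkit: you pull $e_1$ back to a generator $w_0\in\ker N_0^{k_0}$, use Proposition~\ref{kernel} together with the standard fact (not recorded in Lemma~\ref{semigap}, but true) that the two semigaps between equal-dimensional subspaces coincide to approximate $w_0$ by some $w\in\ker N^{k_0}$, and then push forward to $v=N^{k_0-1}w$. This is a more self-contained route and makes explicit exactly where the hypothesis $B\in\mathcal{J}(\Omega(T_0))$ enters, namely through the dimension equalities $\dim\ker N_0^{j}=\dim\ker N^{j}$ (and likewise for the images in your continuity step $v\notin\operatorname{Im}(N^{k_0})$). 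One small slip: at the end you write ``at the eigenvalue $\lambda_1=\mu_1$'', but these are merely close, not equal; Lemma~\ref{reducingJordan} is applied at $\lambda_1$ for $T_0$ and at $\mu_1$ for $V_1^{*}BV_1$, and the conclusion that the two truncations share a Jordan structure uses that the partial sizes $m_k(\cdot,\lambda_1)$ and $m_k(\cdot,\mu_1)$ already match.
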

\begin{proof}
%	We assume all vectors in this proof to be unit.
	First of all recall that by Proposition~\ref{dimeq} we have the Lipshitz stability of the eigenvalues. That is for some order of $\{\lambda_i\}$'s and $\{\mu_j\}$'s we get
	 	$$|\mu_i-\lambda_i|\leq K\|B-T_0\|,\quad i=1,2,\dots,|\sigma(T_0)|.$$
	So we fix this exact order of the eigenvalues, without loss of generality assume that the first entry of $T_0$ is $\lambda_1$, and construct $\{f_j\}_{j=1}^{m}$, the  Jordan chain corresponding to $\lambda_1$ for $T_0$ and having the eigenvector $f_1=e_1:=[1\ 0\ 0\ \ldots\ 0]^\top$ in it, where $m=m_l(B,\mu_1)=m_l(T_0,\lambda_1)$ for some $l$. 
	
	Our next step is constructing the corresponding "close" chain. 
	According to \cite[Propostition 1.5]{BOP}, there is $\{g_{k}\}$ corresponding to $\mu_1$ of $B$ that obey the desired Lipshitz bound, that is
	$$\|g_j-f_j\|
%=\|P_{\ker(B-\mu_iI)^{m}}f_{m}^{(i)}-f_m^{(i)}\|\le$$
%	$$ \|f_{m}^{(i)}\|\theta(\ker(B-\mu_iI)^{m},\ker(T_0-\lambda_iI)^{m})
\le K_j\|B-T_0\|,$$
	where $K_j$ for $j=1,\ldots,m$ depends on $T_0$ only.
	Now, we construct $V_1$ which is close to the identity and unitary whose first column is parallel to $g_1$, i.e. $v_1=\frac{g_1}{\|g_1\|}$. Since $g_1$ is close to $e_1$, the same is true about $v_1$ and $e_1$.
	$$\|v_1-e_1\|=\|v_1-\frac{e_1}{\|g_1\|}+\frac{e_1}{\|g_1\|}-e_1\|\le \frac{\|g_1-e_1\|}{\|g_1\|} +\left|\frac{1}{\|g_1\|}-1\right|\le
	$$
	$$\le \frac{2K_1\|B-T_0\|}{1-K_1\|B-T_0\|}\le\widehat{K}\|B-T_0\|.
$$
	Moreover, %there is a rotation matrix $G$ such that $g_1=Ge_1$ and $\|Ge_j-e_j\|\le\|v_1-e_1\|$. Define $v_j=Ge_j$ for $j=2,3,\ldots,n$. Use 
	the new orthonormal basis $\{{v_1},\ldots,{v_n}\}$ which form $V_1$ satisfies $\|e_i-v_i\|\leq \widehat{K}\|T_0-B\|$. 
	
%	Recall that the Gram–Schmidt process works as follows
%\begin{align*}
% {v}_1 & =  {g}_1, \\
 %{v}_2 & =  {e}_2-\frac{  {v}_1^*  {e}_2}{  {v}_1^*  {v}_1}{ {v}_1},\\
 %{v}_3 & =  {e}_3-\frac{  {v}_1^*  {e}_3}{  {v}_1^*  {v}_1}{ {v}_1}-\frac{  {v}_2^*  {e}_3}{  {v}_2^*  {v}_2}{ {v}_2}, \\
% {v}_4 & =  {e}_4-\frac{  {v}_1^*  {e}_4}{  {v}_1^*  {v}_1}{ {v}_1}-\frac{  {v}_2^*  {e}_4}{  {v}_2^*  {v}_2}{ {v}_2}-\frac{  {v}_3^*  {e}_4}{  {v}_3^*  {v}_3}{ {v}_3},\\
%& {}\ \ {}\ \  \vdots \\
 %{v}_n & =  {e}_n-\sum_{j=1}^{n-1}\frac{  {v}_j^*  {e}_n}{  {v}_j^*  {v}_j}{ {v}_j}.
%\end{align*}

%In addition from the Lipschitz stability we know that for $j>2$
%$$|v_1^* e_j|= |g_1^*e_j|\le\|g_1-e_1\|\le K_1\|A_0-A\|
%$$
%and similarly for $j>3$
%$$|v_2^* e_j|=|g_1^*e_2| \cdot|g_1^*e_j|\le\|g_1\|\|g_1-e_1\|\le K_2\|A_0-A\|.
%$$
%By induction, we get for $k>j$
%$$|v_j^* e_k|\le\sum_{i=1}^{j-1}\frac{|v_i^*e_j|}{\|v_i\|^2}\cdot |v_i^*e_k|\le K_j\|A_0-A\|.
%$$
%Therefore,
%$$\|e_k- {v}_k\|  = \left\|e_k- {e}_k+\sum_{j=1}^{k-1}\frac{  {v}_j^*  {e}_k}{  {v}_j^*  {v}_j}{ %{v}_j}\right\|=\left\|\sum_{j=1}^{k-1}\frac{  {v}_j^*  {e}_k}{  {v}_j^*  {v}_j}{ %{v}_j}\right\|\le
%$$
%$$\le\sum_{j=1}^{k-1}\frac{ | {v}_j^*  {e}_k|}{ \left\|{ {v}_j}\right\|}\le C\sum_{j=1}^{k-1}{ | %{v}_j^*  {e}_k|}\le \widetilde{M}\|A_0-A\|.$$

	Therefore, we have  $\|V_1-I\|\leq \sqrt{n}\widehat{K}\|B-T_0\|$. 
	
%	The proof for the latter step in the inequality sequence can be found, e.g., in \cite[p. 334]{GLR86}. Next, consider $g_{k}^{(i)}:=(B-\mu_iI)f_{k}^{(i)}$ for $k=1,...,m-1$. By recursion we have
%	$$\|g_k^{(i)}-f_k^{(i)}\|=\|(B-\mu_iI)g_{k+1}^{(i)}-(T_0-\lambda_iI)f_{k+1}^{(i)}\|=$$
%	$$\|(B-T_0)g_{k+1}^{(i)}+T_0(g_{k+1}^{(i)}-f_{k+1}^{(i)})-(\mu_i-\lambda_i)g_{k+1}^{(i)}-\lambda_i(g_{k+1}^{(i)}-f_{k+1}^{(i)})\|\le$$
%		$$\|B-T_0\|\|g_{k+1}^{(i)}\|+\|T_0\|\|g_{k+1}^{(i)}-f_{k+1}^{(i)}\|+|\mu_i-\lambda_i|\|g_{k+1}^{(i)}\|$$
%		$$+|\lambda_i|\|g_{k+1}^{(i)}-f_{k+1}^{(i)}\|\le K_{k}\|B-T_0\|,$$
%		where $K_j$'s ($j=1,\ldots,m$) still depend on $T_0$ only. Define $\widetilde{K}=\max\{K_j\}$. That is 
%	$$\|g_k^{(i)}-f_k^{(i)}\|\le \widetilde{K}\|B-T_0\|$$
%	for all $k=1,\ldots,m$.
%	We use $f_1^{(i)}$ and $g_1^{(i)}$ to construct to lower triangular unitary Hessenberg matrices $U_1$ and $V_1$ respectively.

%		By construction of $V_1$ and using Proposition~\ref{kernel} and Proposition~\ref{dimeq}, we can find constants $K_i$'s such that $\|f_1-g_1\|\leq K_1\|B-T_0+(\lambda_1-\mu_1)I\|\leq K_1\|B-T_0\|+K_1|\lambda-\mu_1|\leq K_1\|B-T_0\|+{K}_2\|B-T_0\|\leq K_3\|B-T_0\|$.
	
%	Now we can find  corresponding orthonormal bases forming $U_1$ and $V_1$ respectively such that  $\|u_i-v_i\|\leq K_i\|T_0-B\|$.
%	Therefore, we have that $\|V_1-U_1\|\leq K\|B-T_0\|$.
	
%	Note $U_1$ is not necessarily an identity matrix, since the longest Jordan chain may not be corresponding to the first entry on the diagonal of $T_0$.
	 
The last question left is whether the Jordan structures of $T_1$ and $B_1$ are still the same. Recall, we applied our argument to Jordan chains of $T_0$ and $B$ of the same length. By Lemma~\ref{reducingJordan} applied to both resulting matrices, we see that $T_1$ and $B_1$ must have the same Jordan structure. 
\end{proof}

Now we have enough machinery to prove Theorem~\ref{lipschitz}.

\begin{proof}[Proof of Theorem~\ref{lipschitz}]
	Let us note that instead of matrices $A_0$ and $A$ we  consider the upper triangular matrix $T_0$ and the matrix $B=U_0^*AU_0$. Here we are using the fact that
	$$\|A-A_0\|=\|A-U_0T_0U_0^*\|=\|U_0(B-T_0)U_0^*\|=\|B-T_0\|,$$
	since $U_0$ is unitary.
	
	Let $\lambda_1,\ldots,\lambda_p$ and $\mu_1,\ldots,\mu_p$ be the distinct eigenvalues of $T_0$ and $B$ respectively. According to Lemma~\ref{recursion} there is a unitary matrix $V_1$ such that $T_1$ and $B_1$ in
	\begin{equation*}
	T_0=\left[\begin{array}{c|c}
	\lambda_1&\begin{matrix}
	\star&\cdots&\star
	\end{matrix}\\ \hline\\
	\begin{matrix}
	0\\ \vdots \\0
	\end{matrix}&\text{\huge $T_1$}
	\end{array}\right]
	\quad\text{and}\quad
	V_1^*BV_1=\left[\begin{array}{c|c}
	\mu_1&\begin{matrix}
	*&\cdots&*
	\end{matrix}\\ \hline\\
	\begin{matrix}
	0\\ \vdots \\0
	\end{matrix}&\text{\huge $B_1$}
	\end{array}\right]
	\end{equation*}
	have the same Jordan structure, i.e.  $m_k(T_1,\lambda_l)=m_k(B_1,\mu_l)$ with
$$	\|I-V_1\|\leq \widehat{K}_1\|T_0-B\|.$$ Moreover,
$$\|T_1-B_1\|\le \|T_0-V_1^*BV_1\|\le \|T_0-T_0V_1+T_0V_1-BV_1+BV_1-V_1^*BV_1\|\le$$
$$\le \|T_0\|\|I-V_1\|+\|V_1\|\|T_0-B\|+\|V_1\|\|I-V_1^*\|\|B\|\le$$
$$\le (\|B\|+\|T_0\|)\|I-V_1\|+\|T_0-B\|\le (\varepsilon+2\|T_0\|)\|I-V_1\|+\|T_0-B\|\le\widetilde{K}_1\|T_0-B\|.$$
	
	Next, we apply Lemma~\ref{recursion} to $T_1$ and $B_1$, i.e. there is a unitary matrix $V_2$ such that
	\begin{equation*}
	T_1=\left[\begin{array}{c|c}
	\lambda_?&\begin{matrix}
	\star&\cdots&\star
	\end{matrix}\\ \hline\\
	\begin{matrix}
	0\\ \vdots \\0
	\end{matrix}&\text{\huge $T_2$}
	\end{array}\right]
	\quad\text{and}\quad
	V_2^*B_1V_2=\left[\begin{array}{c|c}
	\mu_?&\begin{matrix}
	*&\cdots&*
	\end{matrix}\\ \hline\\
	\begin{matrix}
	0\\ \vdots \\0
	\end{matrix}&\text{\huge $B_2$}
	\end{array}\right]
	\end{equation*}
	have the same Jordan structure, i.e.  $m_k(T_2,\lambda_l)=m_k(B_2,\mu_l)$, with
$$
	\|I-V_2\|\leq \widehat{K}_2\|T_0-B\|
$$ and $
\|T_2-B_2\|\le\|T_1-V_2^*B_1V_2\|\leq \widetilde{K}_2\|T_0-B\|.
$ We continue to recursively apply Lemma~\ref{recursion}.

The last step is combining all the matrices. Define  $\widehat{V}_j=I_j\otimes V_j$ and  $\widehat{V}=\widehat{V}_1\cdot\ldots\cdot\widehat{V}_n$, and $T:=\widehat{V}^*B\widehat{V}$. We know that $
\|I-\widehat{V}_j\|\leq \widehat{K}\|T_0-B\|
$ where $\widehat{K}=\max_j \widehat{K}_j$.
Thus, 
$$\|I-\widehat{V}\|=\|I-\widehat{V}_1\cdot\ldots\cdot\widehat{V}_n\|=$$
$$\|I-\widehat{V}_2\cdot\ldots\cdot\widehat{V}_n+\widehat{V}_2\cdot\ldots\cdot\widehat{V}_n-\widehat{V}_1\cdot\ldots\cdot\widehat{V}_n\|\le$$
$$\|I-\widehat{V}_2\cdot\ldots\cdot\widehat{V}_n\|+\|I-\widehat{V}_1\|\le\ldots\le\sum_1^n\|I-\widehat{V}_j\|\le n\widehat{K}\|T_0-B\|$$
and, similarly,
$$\|T_0-T\|\le \widetilde{K}\|T_0-B\|.$$
By taking $K=n\widehat{K}+\widetilde{K}$ we obtain \eqref{lipschitzeq} and hence conclude the proof of this theorem.
\end{proof}

%%%%%%%%%%%%%%%%%%%%%%%%%%%%%%%%%%%%%%%%%%%%%%%%%%%%%%%%%%%%%%%%%

\section{Gohberg-Kaashoek Numbers and Forward Stability}

What happens when the eigenvalues split in general? What do Gohberg-Kaashoek numbers of the original matrix and its perturbation tell us about the forward stability of the Schur form? These are the questions that we are going to address in this section.

First of all, let us notice that Lemma~\ref{reducingJordan} does not work here. Here is why.

\begin{example}
Consider
$$T_0=\left[\begin{array}{cccc|ccc} 
0&1&&&&&\\
&0&1&&&&\\
&&0&1&&&\\ 
&&&0&&&\\\hline
&&&&0&1&\\
&&&&&0&1\\
&&&&&&0\end{array}\right]
\text{ and }B=\left[\begin{array}{cccc|ccc} 
0&1&&&&&\\
&-\varepsilon&1&&&&\\
&&\varepsilon&1&&&\\ 
&&&0&&&\\\hline
&&&&0&1&\\
&&&&&0&1\\
&&&&&&\varepsilon\end{array}\right]
$$
\end{example}
We start with $m_1(T_0)=m_1(T_0,0)=m_1(B,0)+m_1(B,\varepsilon)+
m_1(B,-\varepsilon)=m_1(B)=4$ and $m_2(T_0)=m_2(T_0,0)=m_1(B,0)+m_1(B,\varepsilon)=m_2(B)=3$. If we consider the truncation as before we will get 
$$T_1=\left[\begin{array}{ccc|ccc} 
0&1&&&&\\
&0&1&&&\\ 
&&0&&&\\\hline
&&&0&1&\\
&&&&0&1\\
&&&&&0\end{array}\right]
\text{ and }B_1=\left[\begin{array}{ccc|ccc} 
-\varepsilon&1&&&&\\
&\varepsilon&1&&&\\ 
&&0&&&\\\hline
&&&0&1&\\
&&&&0&1\\
&&&&&\varepsilon\end{array}\right].
$$
That is $m_1(T_0)=3$ and $m_1(B)=4$, $m_2(T_0)=3$ and $m_2(B)=2$. However, it does not mean the result of Theorem~\ref{sameGK} is not true. We just need to find a different approach.

We need an alternative to Jordan bases. Recall that every matrix $A$ is similar to its Jordan form $J$. We could group together all Jordan blocks corresponding to the longest chains for each distinct eigenvalue. Denote by $A_1$ the direct sum of these blocks.Its size is $m_1(A)\times m_1(A)$. Remove these blocks from the consideration and take a look at the Jordan blocks that are left. Now group  all Jordan blocks corresponding to the longest (the second longest for the original set of Jordan blocks) chains for each distinct eigenvalue again. Call $A_2$ their direct sum and its size is $m_2(A)\times m_2(A)$. We continue the process until there is no more Jordan blocks to consider.

Note that $A_1\oplus A_2\oplus\ldots\oplus A_{k_1(A)}$ is permutaion similar to $J$. This construction has several peculiar properties. The characteristic polynomial of $A_1$ is the minimal polynomial of $A$. The characteristic polynomials of $A_k$ are called the invariant factors of $A$ and their degrees are non-increasing. Let us denote them by $p_k(A):=|A_k-\lambda I|$. Two matrices are similar if and only if their invariant factors are identical. 

What we are interested in is the following. Let $P$ be an invertible matrix such that
$A=P(A_1\oplus A_2\oplus\ldots\oplus A_{k_1(A)})P^{-1}$. In this case $P$ consists of a Jordan basis for $A$. Moreover, there is an invertible $S$ such that
$$\widehat{A}_t=\begin{bmatrix}
J_t(\lambda_1)&U&&& \\
&J_t(\lambda_2)&U&&\\
&&\ddots&\ddots&\\
&&&\ddots&U\\
&&&& J_t(\lambda_{s_t})\end{bmatrix},$$
where all the entries of $U$ are zero except the lower left one which is 1, and $A=S(\widehat{A}_1\oplus \widehat{A}_2\oplus\ldots\oplus \widehat{A}_{k_1(A)})S^{-1}$. We will call the basis that forms $S$ {\it the Frobenius basis of $A$.} Note such $S$ exists for any matrix. Since $\lambda_1\ne\lambda_2$, we have
$$\widehat{A}_t=\left[\begin{array}{c|c}
J_t(\lambda_1)&\begin{matrix}&&\\1&\end{matrix} \\ \hline
&J_t(\lambda_2)\end{array}\right]
=R_t\left[\begin{array}{c|c}
J_t(\lambda_1)&\begin{matrix}&&\\0&\end{matrix} \\ \hline
&J_t(\lambda_2)\end{array}\right]R_t^{-1}
=R_t{A}_tR_t^{-1}.$$ 

Let us rewrite it a bit for our purposes.
\begin{lemma}\label{invfactors}
Let $T_0$ be an upper triangular matrix.
$$
T_0=\begin{bmatrix}
\lambda_1&*&\ldots&*\\
0&\lambda_2&\ddots&\vdots\\
\vdots&\ddots&\ddots&*\\
0&\ldots&0&\lambda_n
\end{bmatrix},$$
where $\lambda_j$'s might have repetitions.
Then $T_0=S_0\widehat{J}_0S_0^{-1}$, where $S_0$ is an upper triangular invertible matrix and $J_0$ is also upper triangular and a permutation of the invariant factor form such that $\widehat{J}_0$ has the same main diagonal as $T_0$.
\end{lemma}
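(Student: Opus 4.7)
The plan is induction on the matrix size $n$. The base case $n=1$ is immediate, taking $S_0=1$ and $\widehat{J}_0=T_0$. For the inductive step, I would partition
\[
T_0 = \begin{pmatrix} \lambda_1 & v^T \\ 0 & T_0' \end{pmatrix}
\]
with $T_0'$ an $(n-1)\times(n-1)$ upper triangular matrix, and apply the induction hypothesis to obtain $T_0' = S' \widehat{J}' S'^{-1}$ with $S'$ upper triangular invertible and $\widehat{J}'$ an upper triangular permutation of the invariant factor form of $T_0'$ sharing its diagonal. Lifting this to $T_0$ by $\widetilde{S} = \mathrm{blkdiag}(1, S')$ gives $\widetilde{S}^{-1} T_0 \widetilde{S} = \begin{pmatrix} \lambda_1 & w^T \\ 0 & \widehat{J}' \end{pmatrix}$, where $w^T = v^T S'$.

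Next I would apply a further upper triangular conjugation by $R = \begin{pmatrix} 1 & u^T \\ 0 & I \end{pmatrix}$, which fixes the diagonal entry $\lambda_1$ and the block $\widehat{J}'$, but replaces the top row by $w^T - u^T(\widehat{J}' - \lambda_1 I)$. When $\lambda_1 \notin \sigma(\widehat{J}')$, the operator $\widehat{J}' - \lambda_1 I$ is invertible, so $u^T := w^T(\widehat{J}' - \lambda_1 I)^{-1}$ zeroes out the first row. The resulting $\widehat{J}_0 = \mathrm{blkdiag}(\lambda_1, \widehat{J}')$ is a valid permutation of the invariant factor form, with $\lambda_1$ sitting as a $1\times 1$ block for its own invariant factor, and the overall similarity $\widetilde{S}R$ is upper triangular invertible.

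The hard case is $\lambda_1 \in \sigma(\widehat{J}')$: the image of the linear map $u^T \mapsto u^T(\widehat{J}' - \lambda_1 I)$ equals $\big(\ker(\widehat{J}' - \lambda_1 I)\big)^\perp$, so the component of $w^T$ pairing nontrivially with the right eigenvectors of $\widehat{J}'$ at $\lambda_1$ cannot be eliminated by the choice of $u^T$. The plan is to exploit two sources of freedom: (i) the inductive hypothesis permits any permutation of the invariant factor form for $\widehat{J}'$, and (ii) an additional upper triangular similarity supported on the $\lambda_1$-generalized eigenspace of $\widehat{J}'$ may be applied before fixing $u^T$. Combining these, one can arrange that the unremovable residual of $w^T$ appears as a single $1$ in the precise slot where a coupling $U$ of the invariant factor form would attach a new $\lambda_1$-block at the top to a pre-existing $\lambda_1$-block below, while the remainder is absorbed by $u^T$.

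The main obstacle is exactly this coordination step: ensuring that the inductive arrangement of Frobenius blocks inside $\widehat{J}'$ and the subsequent local upper triangular adjustment on the $\lambda_1$-generalized eigenspace can be chosen compatibly so that the final $\widehat{J}_0$ is simultaneously upper triangular, of permuted invariant factor form, and matches the diagonal of $T_0$ entry-by-entry. The bookkeeping required to track how the coupling 1's propagate through the induction — rather than any single analytic estimate — is what I expect to make the proof technical.
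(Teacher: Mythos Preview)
There is a genuine gap already in your ``easy'' case. When $\lambda_1\notin\sigma(T_0')$, you claim that $\widehat{J}_0=\mathrm{blkdiag}(\lambda_1,\widehat{J}')$ is a permutation of the invariant factor form, with $\lambda_1$ ``sitting as a $1\times1$ block for its own invariant factor''. This misreads the definition: a simple eigenvalue $\lambda_1$ appears only in the \emph{first} invariant factor $\widehat{A}_1$ (whose characteristic polynomial is the minimal polynomial of $T_0$), and there its $1\times1$ Jordan block is \emph{coupled} by an off-diagonal $1$ (the matrix $U$) to the neighbouring Jordan block. Your block-diagonal form omits that coupling, so it is not a row--column permutation of $\widehat{A}_1\oplus\cdots\oplus\widehat{A}_{k_1}$; it has one direct summand too many. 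Hence even the easy case requires the manoeuvre you postponed to the hard case: rather than zeroing the top row you must leave a single $1$ linking $\lambda_1$ into the appropriate Frobenius chain of $\widehat{J}'$. You can hit any $e_k^T$ since $\widehat{J}'-\lambda_1 I$ is invertible, but selecting the correct $k$ (the head of the first chain, wherever the permutation of $\widehat{J}'$ has placed it) --- and, in the hard case, the correct $\lambda_1$-block to prepend to, which is dictated by the Jordan structure of $T_0$ rather than of $T_0'$ --- and then verifying that the resulting off-diagonal pattern is still a bona fide row--column permutation of the invariant factor form of $T_0$, is exactly the content your outline does not supply. Note also that the permutation in the inductive hypothesis is not a free parameter: it is pinned down by the requirement that $\widehat{J}'$ share the diagonal of $T_0'$, so ``freedom (i)'' is more constrained than you suggest.

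The paper avoids induction entirely. It fixes a Jordan basis $\{f_j^{(l)}(\lambda^{(i)})\}$ of $T_0$ in which every vector has minimal leading index in the standard basis (by setting one free variable to $1$ and the rest to $0$ when solving $(T_0-\lambda^{(i)}I)x=0$ and its generalized versions), then for each Gohberg--Kaashoek index $j$ forms $v_j=\sum_t f^{(j)}_{m_j(T_0,\lambda_t)}(\lambda_t)$ and generates the chain $v_j,\ (T_0-\lambda_{s_1}I)v_j,\ (T_0-\lambda_{s_2}I)(T_0-\lambda_{s_1}I)v_j,\ldots$, where at each step $\lambda_{s}$ is the diagonal entry at the current leading index. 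Placing each chain vector as the column of $S_0$ indexed by its own leading position yields an upper-triangular $S_0$ directly, and the coupling $1$'s in $\widehat{J}_0$ fall out of the chain relations automatically, so the splicing bookkeeping that your induction would have to track never arises.
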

\begin{proof}
Let $\{\lambda^{(i)}\}_{i=1}^{d}$ be the set of unique eigenvalues and $\{f_j^{(l)}(\lambda^{(i)})\}$ be a Jordan basis of $T_0$ such that:
\begin{itemize}
    \item $f_j^{(l)}(\lambda^{(i)})=(T_0-\lambda^{(i)})f_{j+1}^{(l)}(\lambda^{(i)})$ for $l=1,\ldots,k_1(T_0,\lambda^{(i)})$ and $j=1,\ldots,m_l(T_0,\lambda^{(i)})-1$, 
    \item $f_{m_l(T_0,\lambda^{(i)})}^{(l)}(\lambda^{(i)})\in \ker(T_0-\lambda^{(i)})^{m_l(T_0,\lambda^{(i)})}\setminus(\ker(T_0-\lambda^{(i)})^{m_l(T_0,\lambda^{(i)})-1}\cup{\text{span}}\{f_j^{(s)}(\lambda^{(i)})\}_{\tiny\begin{matrix}j=1,\ldots,m_s(T_0,\lambda^{(i)})\\s=1,\ldots,l-1.
\end{matrix}})$. 
\end{itemize}
We will require some additional restrictions on $\{\lambda^{(i)}\}_{i=1}^{d}$ that we are about to discuss.

Recall that $T_0$ is upper triangular which means that if \begin{equation}\label{eigenvector}
    f_j^{(l)}(\lambda^{(i)})= \sum_{t=1}^s \alpha_te_t
\end{equation} with $\alpha_s\ne0$ then $\lambda_s=\lambda^{(i)}$ since that will correspond to non-pivot columns of $T_0-\lambda^{(i)}I$. Moreover, we want $S_0$ to be upper triangular. That is for each $\lambda^{(i)}$ while solving $(T_0-\lambda^{(i)}I)x=0$ to get the eigenvectors $\{f_1^{(l)}(\lambda^{(i)})\}$ we set one free variable to 1 and the rest to 0 at a time. Similarly, while solving $(T_0-\lambda^{(i)}I)x=f_1^{(l)}(\lambda^{(i)})$ to get the generalized eigenvectors $\{f_1^{(l)}(\lambda^{(i)})\}$ we set all free variable to 0 and so on until we exhaust all the chains corresponding to $\lambda^{(i)}$. 

This will guarantee the minimal possible $s$ for each $f_j^{(l)}(\lambda^{(i)})$ in \eqref{eigenvector}. That is for each $s=1,\ldots,n$ there is exactly one such a vector $f_j^{(l)}(\lambda^{(i)})$ for some $i,j$, and $l$ among our Jordan basis  that  decomposition \eqref{eigenvector} with $\alpha_s\ne0$ holds true.

Next consider $v_j=\sum_{t=1}^{d}f_{m_j(T_0,\lambda_t)}^{(j)}(\lambda_t)$ for $j=1,\ldots,k_1(T_0)$. For each $j$ construct $$v_j\to(T_0-\lambda_{s_1})v_j\to(T_0-\lambda_{s_2})(T_0-\lambda_{s_1})v_j\to\ldots\to\left(\prod_{t=1}^{m_j(T_0)}(T_0-\lambda_{s_{m_j(T_0)-t}})\right)v_j$$ where each $s_j$ corresponds to the largest index in the decomposition in the standard basis of the previous vector in the chain. That is $s_1>s_2>\ldots>s_{m_j(T_0)}$. Note $\left(\prod_{t=1}^{m_j(T_0)}(T_0-\lambda_{s_{m_j(T_0)-t}})\right)v_j=p_j(T_0)v_j=0$. By our construction all the indexes $s_t$'s are coming from the Jordan chains involving $\{f_{m_j(T_0,\lambda_t)}^{(j)}(\lambda_t)\}_{t}$. 

We are ready to construct $S_0$ and $\widehat{J}_0$. For each $j$ we put vector $v_j$ as $s_1$th column of $S$ and all the vectors in the chain we described above at their corresponding $s_t$th columns of $S$. In $\widehat{J}_0$ we have the only non-zero entry as $\lambda_{s_{m_j(T_0)}}$ in  the $s_{m_j(T_0)}$th column which is the main diagonal entry. Each $s_t$th column for $t=1,\ldots,{m_j(T_0)}-1$ consists of $\lambda_{s_t}$ on the main diagonal entry and 1 as $(s_t,s_{t+1})$ entry, the rest entries are 0.

Hence, we got $S_0$, an upper triangular matrix with no zeros on the main diagonal (i.e. invertible), and $\widehat{J}_0$, an upper triangular matrix which is a row-column permutation of the invariant form of $T_0$. Moreover, by our construction $T_0S_0=S_0\widehat{J}_0$ or $T_0=S_0\widehat{J}_0S_0^{-1}$ which is exactly what we wanted to show.
\end{proof}
\begin{proof}[Proof of Theorem~\ref{sameGK}]
Let us start with the given Schur decomposition of $A_0$, that is $A_0=U_0T_0U_0^*$ where $U_0$ is unitary and $T_0$ is upper triangular. As before we consider matrices $T_0$ and $B=U_0^*AU_0$. From the discussion in this chapter we know that $T_0=S_0\widehat{J}_0S_0^{-1}$ and $B=S\widehat{J}S^{-1}$, where $\widehat{J}_0$ and $\widehat{J}$  are the direct sums of invariant factors and $S_0$ and $S$ consist of the invariant factors bases of $T_0$ and $B$ respectively.

According to Lemma \ref{invfactors}, $\widehat{J}_0$ and $S_0$ could be chosen to be upper triangular, since $T_0$ is upper triangular.

Moreover, we can make these factors close to each other. Since we have control over $\varepsilon$, we can guarantee the separation of eigenvalues, i.e. it is not only that the GK numbers for matrices are the same, but also each eigenvalue $\lambda_j$ of $T_0$ has exactly the same number of the eigenvalues $\mu_{j_1},\ldots\mu_{j_?}$ of $B$ counting multiplicities as the multiplicity of $\lambda_j$ and so $m_k(T_0,\lambda_j)=m_k(B,\mu_{j_1})+\ldots+m_k(B,\mu_{j_?})$.
 That is why there is an order of $\lambda$'s and $\mu$'s such that 
 $$\|\widehat{J}_0-\widehat{J}\|\le \sum_{i=1}^n|\lambda_?-\mu_?|\le K_0\|A_0-A\|^{1/n}.$$
 
	Let us decompose $S_0$ into the basis of $T_0$ described in Lemma \ref{invfactors} and pick $\{f_j\}_{j=1}^{m}$, a chain from this basis, where $m=m_l(B)=m_l(T_0)$. 	
	Our next step is constructing the corresponding "close" chain for $\{\mu_j\}$'s. For
	$$f_m\overset{T_0-\lambda_m I}{\longrightarrow}f_{m-1}\overset{T_0-\lambda_{m-1} I}{\longrightarrow}\ldots\overset{T_0-\lambda_{2} I}{\longrightarrow}f_{1}\overset{T_0-\lambda_1I}{\longrightarrow}0$$
	we find a close to $f_m$ vector $g_m$, using Proposition \ref{kernel},
		$$\|  f_m-g_m\|\le \theta\left( \prod_{j=1}^m\left(T_0 -\lambda_jI\right),\prod_{j=1}^m \left(B -\mu_jI\right)\right)\le K_m\| T_0-B\|^{1/n}$$ 
		and construct a
	chain $\{g_j\}_{j=1}^{m}$ consisting of vectors $g_k=(B-\mu_{k+1})g_{k+1}$ with $\|\mu_{k+1}-\lambda_{k+1}\|<K\|B-T_0\|^{1/n}$. They satisfy the following relations
%		$$g_m\overset{B-\mu_d I}{\longrightarrow}g_{m-1}\overset{B-\mu_d I}{\longrightarrow}\ldots\overset{B-\mu_d I}{\longrightarrow}g_{m-m_l(B,\mu_d)+1}\overset{B-\mu_{d-1} I}{\longrightarrow}	g_{m-m_l(B,\mu_d)}\overset{B-\mu_{d-1} I}{\longrightarrow}\ldots$$
	%	\begin{equation}\label{chain}
%			\overset{B-\mu_{d-1} I}{\longrightarrow}g_{m-m_l(B,\mu_d)-m_l(B,\mu_{d-1})+1}\overset{B-\mu_{d-2} I}{\longrightarrow}\ldots\overset{B-\mu_1 I}{\longrightarrow}g_{1}\overset{B-\mu_1 I}{\longrightarrow}0
%		\end{equation}
%and
%	Let us define $g_{m}:=P_\mathcal{B}f_{m}$, where $\mathcal{B}=\ker\prod_{i=1}^d(B-\mu_{d-i+1}I)^{m_1(B,\mu_{d-i+1})}$. Note that
%	$$\|g_m-f_m\|=\|P_{\mathcal{B}}f_{m}-f_m\|\le$$
%	$$ \|f_{m}\|\theta(\mathcal{B},\ker(T_0-\lambda_iI)^{m})\le K_m\|B-T_0\|^{1/n},$$
%	where $K_1$ depends on $T_0$ only.
%	The proof for the latter step in the inequality sequence is similar to the one in \cite[p. 334]{GLR86}. We are using the gap and not the semigap here, since the dimension of perturbed subspace is the same as of the original subspace ($m=\sum_{i=1}^d{m_1(B,\mu_{i})}$).
	
%	 Next, consider $g_{k}:=(B-\mu_iI)f_{k}$ for $k=1,...,m-1$, where $\mu_i$ corresponds to the transformation in \eqref{chain} for the corresponding vector $g_k$. By recursion as before we have 
	$$\|g_k-f_k\|=\|(B-\mu_{k+1}I)g_{k+1}-(T_0-\lambda_{k+1} I)f_{k+1}\|=$$
	$$\|(B-T_0)g_{k+1} +T_0(g_{k+1} -f_{k+1} )-(\mu_{k+1}-\lambda_{k+1})g_{k+1} -\lambda_{k+1}(g_{k+1} -f_{k+1} )\|\le$$
	$$\|B-T_0\|\|g_{k+1} \|+\|T_0\|\|g_{k+1} -f_{k+1} \|+|\mu_{k+1}-\lambda_{k+1}|\|g_{k+1} \|$$
	$$+|\lambda|\|g_{k+1} -f_{k+1} \|\le K_{k}\|B-T_0\|^{1/n},$$
	where $K_j$'s ($j=1,\ldots,m$) still depend on $T_0$ only and the power $1/n$ appears due to Proposition~\ref{eigstab}. Define $\widetilde{K}=\max\{K_j\}$. That is 
	$$\|g_k-f_k\|\le \widetilde{K}\|B-T_0\|^{1/n}$$
	for all $k=1,\ldots,m$.
	We use $g$'s to construct    $S$ close to $S_0$, i.e. $$\|S-S_0\|\le n\widetilde{K}\|B-T_0\|^{1/n}.$$
	
	\begin{remark}\label{QR} The QR decompositions are Lipschitz stable in the class of invertible matrices, i.e. if $S_0=Q_0R_0$ is invertible then for any $S=QR$ sufficiently close to $A$ we have
	$$
	\|Q-Q_0\|+\|R-R_0\|\le K\|S-S_0\|.$$
	This fact together with its proof could be found in  \cite{B} for example.
	\end{remark}	

	Let us fix the following QR decomposition of $S_0$: $S_0=IS_0$, where $I$ is the identity matrix. Then according to Remark~\ref{QR}, since $S_0$ is invertible, we can find a QR decomposition of $S=QR$ such that
	\begin{equation}
	\|Q-I\|+\|R-S_0\|\le K\|S-S_0\|\le nK\widetilde{K}\|B-T_0\|^{1/n}.
	\end{equation}

If we define matrix $U=U_0Q$ then we get 
\[ 
 \|U_0-U\|=\|I-Q\|\le nK\widetilde{K}\|A_0-A\|^{1/n}
\] for some $K_1>0$. Moreover, $T:=U^*AU=R\widehat{J}R^{-1}$ is upper triangular as a product of upper triangular matrices with
$$
 \|T_0-T\|=\|U_0^*A_0U_0-U^*AU\|\le \|U_0^*A_0U_0-U_0^*A_0U\|+
 $$
 $$+\|U_0^*A_0U-U_0^*AU\|+\|U_0^*AU-U^*AU\|\le \|A_0\|\|U_0-U\|+
 $$
 $$+\|A_0-A\|+\|U_0^*-U^*\|\|A\|\le \widehat{K}K\|A-A_0\|^{1/n}.$$
Hence, \eqref{sameGKeq} holds true.

\end{proof}

\begin{remark}\label{remark}
	Note that instead of asking the question: {\rm \lq\lq For which class of matrices can we guarantee the forward stability of the Schur form?\rq\rq} we could ask ourselves: {\rm \lq\lq Which class of matrices has the same GK numbers as any of its perturbations in a small enough neighborhood?\rq\rq} If an $n\times n$ matrix $A_0$ is non-derogatory then  $m(A_0)=[n,0,0,\ldots,0]^\top$ and $k(A_0)=[1,1,1,\ldots,1]$. We will have the same picture for any of its permutations $A$ with $\|A-A_0\|<\epsilon$ for small enough $\epsilon$ depending on $A_0$ only (see \cite{O89}).
	
	Hence, Theorem~\ref{nonderogatory} is the direct consequence of Theorem~\ref{sameGK}. 
\end{remark}

%%%%%%%%%%%%%%%%%%%%%%%%%%%%%%%%%%%%%%%%%%%%%%%%%%%%%%%%%%%%%%%%%
To summarize, we have characterized the class of perturbations of matrices for which Schur forms are forward stable. Thus, completing the study of the Schur form stability under a small perturbation.

\end{document}